\title{Large Deviations for Non-Crossing Partitions}
\author{Janosch Ortmann\\ Warwick Mathematics Institute}
\numberwithin{equation}{section}
\theoremstyle{definition}
\newtheorem{defn}[equation]{Definition}
\newtheorem{ex}[equation]{Example}
\newtheorem{rmk}[equation]{Remark}
\theoremstyle{plain}
\newtheorem{thm}[equation]{Theorem}
\newtheorem{prop}[equation]{Proposition}
\newtheorem{lem}[equation]{Lemma}
\newtheorem{cor}[equation]{Corollary}
\renewcommand{\c}{\ensuremath{\mathcal{C}}}
\newcommand{\C}{\ensuremath{\mathbb{C}}}
\renewcommand{\d}{\mathrm d}
\newcommand{\E}{\ensuremath{\mathbb{E}}}
\newcommand{\e}{\ensuremath{\mathcal{E}}}
\newcommand{\g}{\ensuremath{\mathfrak{G}}}
\renewcommand{\i}{\ensuremath{\mathcal{I}}}
\newcommand{\Lb}{\ensuremath{\bm{L}}}
\newcommand{\map}{\ensuremath{\longrightarrow}}
\newcommand{\Map}{\ensuremath{\longmapsto}}
\newcommand{\m}{\ensuremath{\mathfrak{M}}}
\newcommand{\N}{\ensuremath{\mathbb{N}}}
\newcommand{\NC}{\ensuremath{\text{NC}}}
\newcommand{\one}{\ensuremath{\mathbf{1}}}
\newcommand{\p}{\ensuremath{\mathcal{P}}}
\renewcommand{\P}{\ensuremath{\mathbb{P}}}
\newcommand{\Q}{\ensuremath{\mathbb{Q}}}
\newcommand{\R}{\ensuremath{\mathbb{R}}}
\newcommand{\s}{\ensuremath{\sigma}}
\newcommand{\Sb}{\ensuremath{\bm{S}}}
\newcommand{\ul}[1]{\underline{#1}}
\newcommand{\w}{\ensuremath{\mathcal{W}}}
\newcommand{\wh}[1]{\widehat{#1}}
\newcommand{\wt}[1]{\widetilde{#1}}
\newcommand{\x}{\ensuremath{\mathcal{X}}}
\newcommand{\y}{\ensuremath{\mathcal{Y}}}
\newcommand{\Z}{\ensuremath{\mathbb{Z}}}
\newcommand{\abs}[1]{\ensuremath{\left|#1\right|}}
\newcommand{\inv}[1]{\ensuremath{\frac{1}{#1}}}
\newcommand{\norm}[1]{\ensuremath{\|#1\|}}
\newcommand{\spt}{\mathrm{supp}}
\newcommand{\uind}[2]{\ensuremath{#1^{(#2)}}}
\newcommand{\ts}[1]{\textsc{#1}}
\DeclareMathOperator*{\diag}{diag}
\begin{document}

\maketitle

\abstract{\noindent We prove a large deviations principle for the empirical law of the block sizes of a uniformly distributed non-crossing partition. As an application we obtain a variational formula for the maximum of the support of a compactly supported probability measure in terms of its free cumulants, provided these are all non-negative. This is useful in free probability theory, where sometimes the R-transform is known but cannot be inverted explicitly to yield the density.}

\fancyhead[R]{\nouppercase{\leftmark}}
\fancyhead[L]{}
\cfoot{\thepage}

\bibliographystyle{acm}

\bibstyle{plain}

%
%

\section{Introduction}
\label{Sec:Introduction}

In this paper we study the block structure of a \emph{non-crossing partition} chosen uniformly at random. Any partition $\pi$ of the set $\ul{n}=\{1,\ldots,n\}$ can be represented on the circle by marking the points $1,\ldots,n$ and connecting by a straight line any two points whose labels are in the same block of the partition. The partition is then said to be \emph{non-crossing} if none of the lines intersect. These objects were introduced by G.~\ts{Kreweras} \cite{Kreweras72} and have been studied in the combinatorics literature as an example of a \emph{Catalan structure}.

We study the empirical measure defined by the blocks of a uniformly random non-crossing partition $\pi$. That is, if $\pi$ has $r$ blocks of sizes  $B_1,\ldots,B_r$ we consider the random probability measure on $\N$ defined by
\begin{align*}
	\lambda_n & = \inv{r} \sum_{j=1}^r \delta_{B_j}.
\end{align*}
We will prove that the sequence $\big(\lambda_n\big)_{n\in\N}$	satisfies a large deviations principle of speed $n$ on the space $\m_1(\N)$ of probability measures on the natural numbers.

This result is obtained via a construction of a uniformly random non-crossing partition by suitably conditioned independent geometric random variables. As a stepping-stone we establish a joint large deviations principle for the process versions of empirical mean and measure of that independent sequence.

A main application of the large deviations result comes from free probability theory. Often one can obtain the free cumulants of a non-commutative random variable. These cumulants characterise the underlying distribution, but obtaining the density involves locally inverting an analytic function which may not lead to a closed-form expression. In such a situation one would still hope to deduce some properties of the underlying probability measure, for example about its support.

The free analogue of the moment-cumulant formula expresses the moments of a non-commutative random variable in terms of its free cumulants. More precisely the moments can be written as the expectation of an exponential functional (defined in terms of the free cumulants) of a non-crossing partition, chosen uniformly at random. Knowing the large deviations behaviour of the latter allows us to apply Varadhan's lemma to describe the asymptotic behaviour of the moments. This in turn yields the maximum of the support of the underlying distribution in terms of the free cumulants.

\subsection*{Statement of Results}

Our first main result is the large deviations property of the random measures $(\lambda_n)_{n\in\N}$. In the form we are stating it here it is a direct corollary of Theorem~\ref{Thm:LDP}.

\begin{thm}
	\label{Thm:LDPLambdaOnly}
	The sequence $(\lambda_n)_{n\in\N}$ satisfies a large deviations principle in $\m_1(\N)$ with good convex rate function $J$ given by
\begin{align}
	\label{Eq:LDPLambdaOnly}
	J(\mu) & = \log 4  -\inv{m_1(\mu)} H(\mu) - \inv{m_1(\mu)} \log\big(m_1(\mu)-1\big) + \log\left( 1-\inv{m_1(\mu)} \right)
\end{align}
where $H(\mu)$ denotes the entropy of a probability measure $\mu$ and $m_1(\mu)$ its mean.
\end{thm}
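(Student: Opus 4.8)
The plan is to realise a uniformly random non-crossing partition through conditioned independent geometric random variables and then transport a Sanov-type large deviations principle along this encoding. Write $b_i$ for the number of blocks of size $i$ and $r=\sum_i b_i$ for the number of blocks, so $\sum_i i\,b_i=n$; the classical enumeration of non-crossing partitions by block type gives
\[
	\#\bigl\{\pi\in\NC(n):\ \pi\text{ has }b_i\text{ blocks of size }i\text{ for all }i\bigr\}
	=\frac1r\binom{n}{r-1}\binom{r}{b_1,b_2,\ldots}
	=\frac1{n+1}\binom{n+1}{b_1,b_2,\ldots,\,n{+}1{-}r}.
\]
Let $W_1,\ldots,W_{n+1}$ be i.i.d.\ $\Geom(1/2)$ on $\Z_{\ge0}$, i.e.\ $\P(W_i=k)=2^{-k-1}$. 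Since $\prod_{i=1}^{n+1}2^{-W_i-1}=2^{-2n-1}$ is \emph{constant} on $\{\sum_i W_i=n\}$ and $\P(\sum_i W_i=n)=2^{-2n-1}\binom{2n}{n}$, matching multinomial weights against the displayed formula and using $C_n=\frac1{n+1}\binom{2n}{n}$ shows that $\lambda_n$ has the same law as the empirical measure of the \emph{strictly positive} entries of $(W_1,\ldots,W_{n+1})$ conditioned on $\sum_{i=1}^{n+1}W_i=n$. (Equivalently one routes through the bijection between non-crossing partitions of $\ul{n}$ and plane trees and the {\L}ukasiewicz encoding, which also yields the process-level statement; the cyclic-lemma rotation is irrelevant for the empirical measure.) It therefore suffices to prove an LDP for this conditioned functional of the i.i.d.\ geometric array.

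The engine is Sanov's theorem: $\nu_{n+1}:=\frac1{n+1}\sum_{i=1}^{n+1}\delta_{W_i}$ obeys an LDP in $\m_1(\Z_{\ge0})$ at speed $n$ with good rate function the relative entropy with respect to $\Geom(1/2)$,
\[
	D(\nu):=\sum_k\nu(k)\log\!\bigl(\nu(k)\,2^{k+1}\bigr)=\log4-H(\nu)+\log2\,\bigl(m_1(\nu)-1\bigr).
\]
What I would actually establish is a joint, process-level principle retaining enough information to make the relevant maps continuous --- for the partial-sum processes $t\mapsto\frac1n\sum_{i\le nt}W_i$ and $t\mapsto\frac1n\sum_{i\le nt}\delta_{W_i}$, or simply for the pair $(\nu_{n+1},\lambda_n)$ --- because neither $\nu\mapsto m_1(\nu)$ nor the passage to the empirical measure of the positive entries is continuous for the weak topology: mass can leak to infinity. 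Following a vanishing fraction of atypically large values shows such leakage costs $\log2$ per unit of escaped mean, the slope $\log2$ being the edge of the interval on which $\Geom(1/2)$ has exponential moments; this is the extra term in the joint rate function.

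Finally I would condition on $\{\sum_{i=1}^{n+1}W_i=n\}$, i.e.\ on $\{m_1(\nu_{n+1})=n/(n+1)\}$. Since $\Geom(1/2)$ has mean exactly $1$ the conditioning level tends to the unique zero of the rate function, and the normalising probability $2^{-2n-1}\binom{2n}{n}$ decays only polynomially, so conditioning is free on the exponential scale: the conditional principle carries rate $\nu\mapsto D(\nu)+\bigl(1-m_1(\nu)\bigr)^{+}\log2$, which equals $\log4-H(\nu)$ on $\{m_1(\nu)\le1\}$. Applying the contraction principle to recover the empirical measure of the positive part, and observing that the mean-one constraint together with the prescribed shape of $\mu$ essentially determine the optimising $\nu$, the variational problem is solved at (essentially) a single point and, using $D(\nu)=\log4-H(\nu)$ there, collapses after a short computation to
\[
	J(\mu)=\log4-\frac1{m_1(\mu)}H(\mu)-\frac1{m_1(\mu)}\log\!\bigl(m_1(\mu)-1\bigr)+\log\!\Bigl(1-\frac1{m_1(\mu)}\Bigr).
\]
Goodness and convexity of $J$ descend from Sanov and the contraction; as a check (and an alternative self-contained proof) Stirling's formula applied directly to the enumeration above recovers the same $J$.

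The step I expect to be the real obstacle is exactly this topological bookkeeping: because the mean functional and the passage to the positive part are not continuous for the weak topology on $\m_1(\N)$, a one-line ``Sanov plus contraction'' is unavailable, and one must set up the joint/process-level principle, analyse the leakage of mass with care (it is precisely this leakage that makes $J$ fail to be continuous, e.g.\ near $\delta_1$, so that weak neighbourhoods of harmless-looking measures already contain measures of much smaller $J$), and handle conditioning on a set that is neither open nor a locus of continuity. Once that structure is in place, the variational optimisation and the algebra to closed form are routine.
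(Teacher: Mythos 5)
Your reduction is genuinely different from the paper's route. The paper never touches Kreweras' block-type enumeration: it encodes a uniform element of $\NC(n)$ through the Dyck-path bijection, i.e.\ through \emph{two} i.i.d.\ $\Geom(1/2)$ sequences (ascents and descents) conditioned on the Dyck event $E_n$, proves a joint process-level Cram\'er--Sanov LDP in the style of Dembo--Zajic for the pair of partial-sum and empirical-measure paths, first establishes an LDP for the triple $(\sigma_n,\lambda_n,\tau_n)$ (Theorem~\ref{Thm:LDP}), the term $\Theta$ arising from maximising entropy over the ascent measure, and only then projects. Your single-sequence encoding (conditioned geometrics via the block-type count, equivalently the {\L}ukasiewicz/plane-tree picture) is correct -- I checked the multinomial matching -- and is arguably more economical: one Sanov theorem replaces the two-sequence machinery, and in both approaches the conditioning is free at exponential scale. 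Your diagnosis of the technical obstacles (non-continuity of $m_1$, leakage of mass, conditioning on a thin set) is exactly why the paper works at process level.

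The genuine gap is the final optimisation. Granting your (correct) conditional rate $\nu\mapsto\log 4-H(\nu)$ on $\{m_1(\nu)\le 1\}$, the contraction map to $\lambda_n$ is $\nu\mapsto\nu|_{\N}/\nu(\N)$, and its fibre over $\mu$ is the whole one-parameter family $\nu_c=(1-c)\delta_0+c\mu$ with $0<c\le 1/m_1(\mu)$: the optimising $\nu$ is \emph{not} ``essentially determined'' by $\mu$, because for $c<1/m_1(\mu)$ the missing mean escapes in a vanishing fraction of huge entries (huge blocks), which does not move $\lambda_n$ at all in the weak topology. The contracted rate is therefore $\log 4-\sup_{0<c\le 1/m_1(\mu)}\{-c\log c-(1-c)\log(1-c)+cH(\mu)\}$; the unconstrained maximiser is $c^*=e^{H(\mu)}/(1+e^{H(\mu)})$, so whenever $m_1(\mu)<1+e^{-H(\mu)}$ the supremum sits at $c^*<1/m_1(\mu)$ and the value $\log 4-\log(1+e^{H(\mu)})$ is \emph{strictly smaller} than the displayed $J(\mu)$; only for $m_1(\mu)\ge 1+e^{-H(\mu)}$ does the optimum sit at $c=1/m_1(\mu)$ and reproduce (\ref{Eq:LDPLambdaOnly}). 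This is not an artifact of sloppy bookkeeping: for $\mu=(1-\epsilon)\delta_1+\epsilon\delta_2$, the very enumeration you quote counts about $e^{n(-c^*\log c^*-(1-c^*)\log(1-c^*)+c^*H(\mu))}$ non-crossing partitions with $\approx c^*n$ blocks of profile $\mu$ plus one block of size $\approx(1-c^*m_1(\mu))n$, and along these configurations $\lambda_n\to\mu$ weakly while the probability decays strictly slower than $e^{-nJ(\mu)}$. So your argument, carried out carefully, yields the lower-semicontinuous relaxation of (\ref{Eq:LDPLambdaOnly}) rather than the stated formula (and, incidentally, this shows the stated $J$ fails lower semicontinuity at such $\mu$, a delicate point your closing remark brushes against but does not resolve). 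To land on the stated $J$ you must retain the block density -- equivalently the mean $\sigma_n$ or the variable $\tau_n$ -- as an extra coordinate before projecting, which is precisely what the paper's Theorem~\ref{Thm:LDP} does, and then confront explicitly what projecting that coordinate out does to the rate function; as written, the step ``the variational problem is solved at essentially a single point'' is false and the proof does not go through.
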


\par\noindent Since $J(\nu)=0$ if and only if $\nu$ is the geometric distribution $\g_2$ of parameter $\inv{2}$, we obtain a law of large numbers as an immediate corollary. Namely $\lambda_n\longrightarrow \g_2$ almost surely in the weak topology.

In the proof of the theorem we need to work with the the function $m_1(\mu)$, which is not continuous in the weak topology. As a stepping-stone we therefore establish a joint large deviations principle for the path versions of empirical mean and measure of i.i.d. geometric random variables.

Theorem~\ref{Thm:LDPLambdaOnly} has an application in free probability. Namely it allows us to express the maximum of the support of a compactly supported probability measure in terms of its free cumulants, provided the latter are non-negative. For some background on free probability see Section~\ref{Sec:SpectralEdgeFormula} and the references given there.

\paragraph{Theorem~\ref{Thm:EdgeNonnegative}}
	Let $\mu$ be a compactly supported probability measure whose free cumulants $(k_n)_{n\in\N}$ are all non-negative. Then the maximum of the support $\rho_\mu$ of $\mu$ is given by
	\begin{align*}
		\log\left(\rho_\mu \right) & = \sup\left\{\inv{m_1(p)}\sum_{n\in L} p_n\log\left(\frac{k_n}{p_n}\right) - \frac{\Theta(m_1(p))}{m_1(p)} \colon p\in\m_1^1(L)\right\}
	\end{align*}	
	where $L$ is the set of $n\in\N$ such that $k_n\neq 0$ and $\m_1^1(L)$ denotes the set of $p\in\m_1^1(\N)$ with $p(L^c)=0$.

\paragraph{Acknowledgements.} The author would like to thank his PhD advisor, Neil O'Connell for his advice and support in the preparation of this paper. We also thank Philippe Biane, Jon Warren and Nikolaos Zygouras for helpful discussions and suggestions.


\section{Uniformly Random Non-Crossing Partitions}
\label{Sec:UnifNCP}

In this section we introduce the combinatorial objects mentioned in the introduction. We describe how to generate the uniform distribution on the set of Dyck paths or non-crossing partitions using two sequences of independent and identically distributed geometric random variables. This construction will be used in Section~\ref{Sec:LDPNCP} to prove the large deviations result, Theorem~\ref{Thm:LDP}.

\subsection{Catalan Structures}

A \emph{Dyck path} of \emph{semilength} $n$ is a lattice path in $\Z^2$ that never falls below the horizontal axis, starting at $(0,0)$ and ending at $(2n,0)$, consisting of steps $(1,1)$ (\emph{upsteps}) and $(-1,1)$ (\emph{downsteps}). Every such path consists of exactly $n$ up- and downsteps each. The set of Dyck paths of semilength $n$ is denoted by $\p(n)$. A maximal sequence of upsteps is called an \emph{ascent}, while a maximal sequence of downsteps is referred to as a \emph{descent}.

Non-crossing partitions were introduced by G.~\ts{Kreweras} \cite{Kreweras72}. A partition $\pi$ of the set $\ul{n}=\{1,\ldots,n\}$ is said to be \emph{crossing} if there exist distinct blocks $V_1$, $V_2$ of $\pi$ and $x_j,y_j\in V_j$ such that $x_1<x_2<y_1<y_2$. Otherwise $\pi$ is said to be \emph{non-crossing}. Equivalently label the vertices of a regular $n$-gon $1,\ldots,n$, then $\pi$ is non-crossing if and only if the convex hulls of the blocks are pairwise disjoint. 

\begin{figure}[H]
	\begin{center}
	\includegraphics[scale = .2]{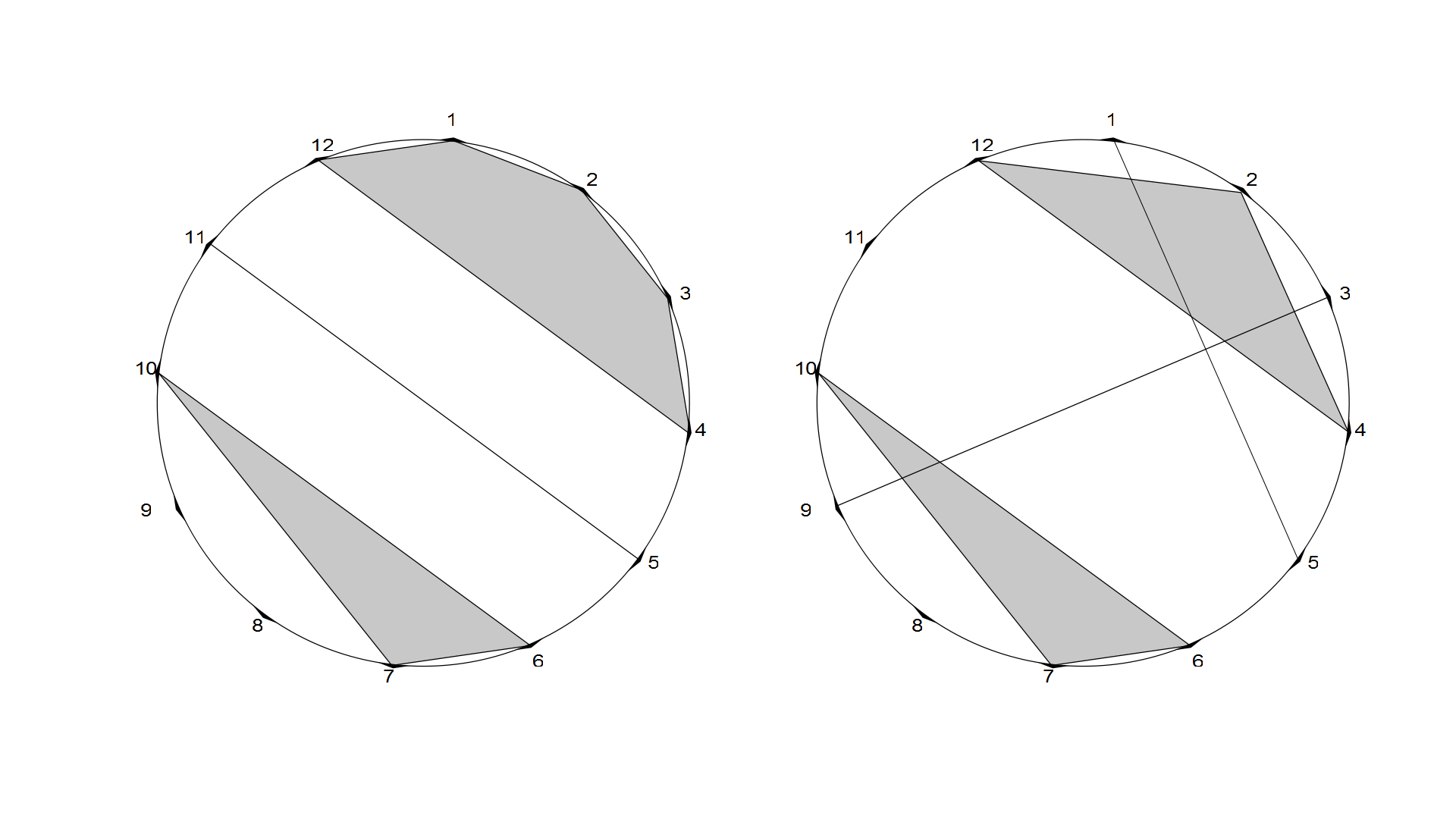}
	\hspace{5cm}
	\parbox{11cm}{\caption*{\footnotesize{The partition $\{\{8\}, \{9\}, \{10,7,6\}, \{11,5\}, \{12,4,3,2,1\} \}$ is non-crossing, $\{\{5,1\}, \{8\}, \{9,3\},\{10,7,6\}, \{12,4,2\} \}$ is crossing.}}}
\end{center}

\end{figure}

\par\noindent The set of all non-crossing partitions of $\ul{n}$ is denoted by $\NC(n)$. Combinatorial results on non-crossing partitions, including instances where they arise in topology and mathematical biology can be found in R.~ \ts{Simion}'s survey \cite{Simion00}. For other areas of mathematics where non-crossing partitions appear see also \ts{McCammond} \cite{McCammond06}. The role of non-crossing partitions in free probability is detailed in Section~\ref{Sec:SpectralEdgeFormula}.

There exists a well-known bijection $\Phi\colon \p(n)\map\NC(n)$ which maps the descents of $p\in\p_n$ to the blocks of $\Phi(p)$, see for example \ts{Callan} \cite{Callan08} or \ts{Yano--Yoshida} \cite{YanoYoshida07}. Given $p\in\p_n$ label the upsteps from left to right by $1,\ldots,n$. Label each downstep by the same index as its corresponding upstep, that is the first upstep to the left on the same horizontal level. Then the descents induce an equivalence relation on $\ul{n}$: two labels are equivalent if and only if the corresponding downsteps are part of the same descent. The associated partition is then easily seen to be non-crossing. 

Conversely, given $\pi=\{V_1,\ldots,V_r\}\in\NC(n)$ write the elements of each block $V_j$ in descending order, then sort the blocks in ascending order by their least elements. This gives the descent structure of $\Phi^{-1}(\pi)$, which can be completed by the ascents in a unique way to form a Dyck path.

\begin{figure}[H]
	\begin{center}
	\includegraphics[scale = .2]{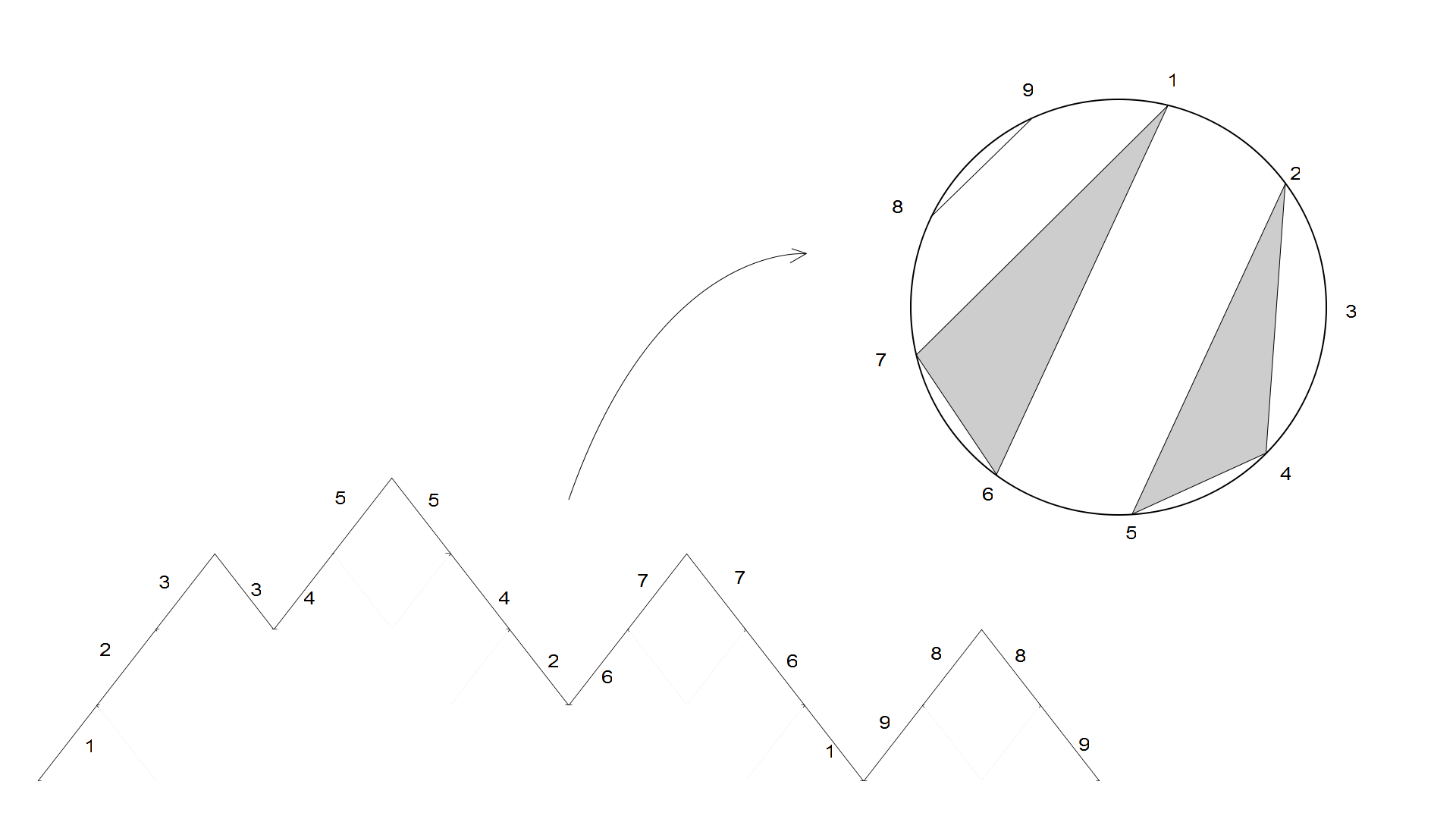}
	\hspace{10cm}
	\parbox{11cm}{\caption*{\footnotesize{An example for the bijection $\Phi$.}}}
\end{center}

\end{figure}

The common cardinality of $\p(n)$ and $\NC(n)$ is $C_n=\frac{(2n)!}{n!(n+1)!}$, the $n^\text{th}$ \emph{Catalan number}. Such combinatorial objects are referred to as \emph{Catalan structures} and have been much studied. A list of Catalan structures has been compiled by R.~\ts{Stanley} \cite{Stanley2}, where many results and references on Catalan structures can also be found.

Of course our results extend to any statistic $s$ of any other Catalan structure $\s$ for which there exists a bijection $\Psi\colon \s\map \NC(n)$ that maps $s$ to the blocks of the image partition. Examples include the blocks of \emph{non-nesting partitions} (see \ts{Reiner} \cite{Reiner97}, Remark 2) or the length of chains in ordered trees as described in \ts{Prodinger} \cite{Prodinger83}.

\subsection{A Representation for the Uniform Measure}
\label{Sec:RepUnifM}

Since the sets $\NC(n)$ and $\p(n)$ are finite there exists a uniform distribution on them. Let $w$ have this distribution on $\p(n)$. Such a random variable is also referred to as a \emph{Bernoulli excursion}. We will study the descent structure of a $w$. Because of the above bijection this is equivalent to studying the blocks of a uniformly random element of $\NC(n)$. 


The number of Dyck paths with semilength $n$ and $k$ descents is given \cite{Deutsch99} by the \emph{Narayana numbers}

\begin{align*}
	N(n,k) = \inv{n}\begin{pmatrix}n\\ k \end{pmatrix} \begin{pmatrix} n \\ k-1\end{pmatrix}.
\end{align*}
Therefore the expected number of descents in $w$ is $\frac{n+1}{2}$. Further it follows from results in \cite{YanoYoshida07} (p.3153) that the expected number of descents of length 1 is given by $\frac{n^2+n}{4n-4}$. Asymptotically we therefore have about $\frac{n}{2}$ descents, roughly half of which are singletons.  
However there do not seem to be asymptotic results beyond the singleton descents
in the literature.  Heuristic arguments suggest that about half of the remaining descents is of length 2 and so on, and indeed, the law of large numbers mentioned in Section~\ref{Sec:Introduction} (Corollary~\ref{Thm:LLN}) confirms that this is the case.\\

\par\noindent We now construct a Bernoulli excursion using conditioned geometric random variables. For any $n\in\N$ let $b_n\colon \N^{2n}\map \bigcup_{k\in\N} \wh\p(k)$ (where $\wh\p(k)$ is the set of \emph{all} length $2k$ lattice paths on \Z, starting at zero and consisting of steps $(1,1)$ and $(1,-1)$)  denote the map that reconstructs a path from a sequence of ascents and descents. That is, $b_n(x_1,y_1,\ldots,x_n,y_n)$ is the path described by $x_1$ upsteps, $y_1$ downsteps, then $x_2$ upsteps and so on, terminating with $y_n$ downsteps.

Let $X_n,Y_n$ be i.i.d. random variables with common law given by the geometric distribution with parameter $\inv{2}$. We will view these as the subsequent ascents and descents of a simple random walk $\Sigma$ on \R\ starting at 0 with an upstep. Denote by
\begin{align*}
	T_n&:= \sum_{j=1}^n(X_j+Y_j)
	\intertext{the combined length the first $n$ up- and downsteps take in total and let $\wh\tau_n$ be the number of descents completed after $2n$ steps of the simple random walker:}
	\wh\tau_n & = \max\{k\in\N\colon T_k\leq 2n\}.
\end{align*}
We will later work with a renormalisation of $\wh{\tau}_n$, namely $\tau_n = \frac{\wh\tau_n}{2n}$. We denote by $E_n$ the event that $b_{\tau_n}\left(X_1,Y_1, \ldots, X_{\tau_n},Y_{\tau_n}\right)$ is a Dyck path of semilength $n$:
\begin{align}
	\label{Eq:DefEN}
	E_n & = \left\{T_{\wh\tau_n}=2n,\, \sum_{j=1}^{\wh{\tau}_n}X_j = \sum_{j=1}^{\wh{\tau}_n}Y_j,\, \sum_{j=1}^r\left(X_j-Y_j\right)\geq 0\ \forall\, j<\wh\tau_n \right\}.
\end{align}
The following lemma is now straightforward to check.

\begin{lem}
Conditioned on $E_n$ the distribution of $w_{\wh\tau_n}\left(X_1,Y_1, \ldots, X_{\wh\tau_n},Y_{\wh\tau_n}\right)$ on $\p(n)$ is uniform. Hence, conditioned on $E_n$, the random measure $\lambda_n$ defined by
\begin{align}
\label{Eq:DefLambdaN}
\lambda_n=\inv{\wh\tau_n} \sum_{j=1}^{\wh\tau_n}\delta_{Y_j}
\end{align}
is the empirical measure of the descents of a Bernoulli excursion or, equivalently, the block sizes of a uniformly random element of $\NC(n)$.
\end{lem}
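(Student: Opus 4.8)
The plan rests on one \textbf{flatness} observation. Since the geometric law of parameter $\inv{2}$ on $\N=\{1,2,\dots\}$ satisfies $\P(X_1=j)=\P(Y_1=j)=2^{-j}$, any finite string of positive integers $(x_1,y_1,\dots,x_k,y_k)$ is hit by $(X_1,Y_1,\dots,X_k,Y_k)$ with probability
\begin{align*}
	\P\big(X_1=x_1,Y_1=y_1,\dots,X_k=x_k,Y_k=y_k\big) & = 2^{-\sum_{j=1}^{k}(x_j+y_j)},
\end{align*}
which equals $2^{-2n}$ as soon as $\sum_j x_j=\sum_j y_j=n$, \emph{independently} of $k$ and of the string itself. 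This is really the only computation in the proof.

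Next I would reduce the statement to a counting identity. Encode each $p\in\p(n)$ by its ascent--descent structure $(x_1,y_1,\dots,x_k,y_k)$, where $x_i\ge 1$ is the length of the $i$-th ascent, $y_i\ge 1$ that of the $i$-th descent, $k$ is the number of descents, $\sum x_i=\sum y_i=n$, and $\sum_{i\le r}(x_i-y_i)\ge 0$ for all $r<k$. Let $A_p$ be the event that $(X_1,Y_1,\dots,X_k,Y_k)$ equals this string. I would check that on $A_p$ one has $T_k=2n$ while $T_{k-1}<2n<T_{k+1}$ (using $x_i+y_i\ge 2$), hence $\wh\tau_n=k$; that the three defining conditions of $E_n$ in $\bref{Eq:DefEN}$ are then exactly the conditions making $(x_1,y_1,\dots,x_k,y_k)$ the ascent--descent structure of a Dyck path of semilength $n$, so that $A_p\subseteq E_n$ and $b_{\wh\tau_n}(X_1,Y_1,\dots,X_{\wh\tau_n},Y_{\wh\tau_n})=p$ on $A_p$; and, conversely, that on $E_n$ the truncated string $(X_1,Y_1,\dots,X_{\wh\tau_n},Y_{\wh\tau_n})$ necessarily has positive entries, balanced up/down counts, non-negative partial heights and total length $2n$, hence is the ascent--descent structure of some $p\in\p(n)$, placing us on $A_p$. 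Therefore $E_n=\bigsqcup_{p\in\p(n)}A_p$ with each $\P(A_p)=2^{-2n}$, so $\P(E_n)=C_n\,2^{-2n}$ and $\P\big(b_{\wh\tau_n}(X_1,Y_1,\dots,X_{\wh\tau_n},Y_{\wh\tau_n})=p \,\big|\, E_n\big)=\inv{C_n}$ for every $p\in\p(n)$, which is the claimed uniformity; transporting through the bijection $\Phi\colon\p(n)\to\NC(n)$ gives the uniform law on $\NC(n)$.

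For the second assertion it then suffices to unwind definitions: conditioned on $E_n$, the integer $\wh\tau_n$ is the number of descents of the reconstructed Dyck path and $Y_1,\dots,Y_{\wh\tau_n}$ are their lengths; under $\Phi$ descents correspond to blocks and their lengths to block sizes, so the measure $\lambda_n$ of $\bref{Eq:DefLambdaN}$ is indeed the empirical measure of the block sizes of a uniformly random element of $\NC(n)$.

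I do not expect a genuine obstacle here --- this is why the lemma is ``straightforward to check'' --- but the one point requiring care is the bookkeeping around the \emph{random} index $\wh\tau_n$: one must verify that the event $E_n$ of $\bref{Eq:DefEN}$ singles out precisely those sample paths whose first $\wh\tau_n$ blocks reconstruct to a bona fide Dyck path of semilength $n$, with no double counting and no boundary subtlety at the stopping time, so that the decomposition $E_n=\bigsqcup_p A_p$ is exact.
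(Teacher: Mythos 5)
Your proof is correct and is precisely the ``straightforward check'' the paper leaves to the reader: the product form of the geometric laws makes every admissible ascent--descent string with $\sum_j x_j=\sum_j y_j=n$ carry probability $2^{-2n}$ irrespective of the number of descents, so the exact decomposition $E_n=\bigsqcup_{p\in\p(n)}A_p$ (your stopping-time bookkeeping for $\wh\tau_n$ is the only point needing care, and you handle it correctly) yields the uniform conditional law on $\p(n)$, transported to $\NC(n)$ by $\Phi$ with descent lengths becoming block sizes. As a by-product you get $\P(E_n)=C_n4^{-n}$, which agrees to within an immaterial constant factor (coming from the forced initial upstep in the walk construction) with the value computed later in Lemma~\ref{Thm:LogCmTrivial} and in particular gives the same logarithmic asymptotics.
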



\section{Process Level Large Deviations}

Let $(X_n)_{n\in\N}$ be an i.i.d. sequence of geometric random variables with parameter $\inv{2}$ and denote their common law by $\g_2$. We define processes  $\Sb_n$, $\Lb_n$, indexed by the unit interval and taking values in the space of real numbers and positive finite measures on \N\ respectively by
\begin{align}
	\label{Eq:DefSPath}
	\Sb_n(t) & = \inv{n}\sum_{j=1}^{\lfloor nt\rfloor} X_j+\left(s-\frac{\lfloor ns\rfloor}{n}\right) X_{\lfloor ns\rfloor + 1} \\ 
	\label{Eq:DefLPath}
	\Lb_n(t) & = \inv{n}\sum_{j=1}^{\lfloor nt\rfloor} \delta_{X_j} +\left(s-\frac{\lfloor ns\rfloor}{n}\right) \delta_{X_{\lfloor ns\rfloor + 1}}.
\end{align}
In this section we prove a large deviations principle for the pair $(\Sb_n,\Lb_n)$. We start by proving a joint LDP for the pair of end-points $\left(\Sb_n(1),\Lb_n(1)\right)$ via a projective limit argument. We then adapt arguments from \ts{Dembo--Zajic} \cite{DemboZajic95} to obtain the path-wise result.

\begin{rmk}
The reason for obtaining this joint large deviations principle is that for our main large deviations result we need to use the mean as well as the empirical measure but the map $\mu\Map m_1(\mu)$ is not continuous in the weak topology. An alternative would have been a priori to strengthen the topology on $\m_+(\N)$ to the \emph{Monge--Kantorovich topology}, the coarsest topology that makes the map $m_1$ continuous and is finer than the weak topology. However results by \ts{Schied} \cite{Schied98} show that in this topology Sanov's theorem does not hold for geometric random variables, because this distribution does not possess all exponential moments.
\end{rmk}

\par\noindent Let us first recall the definition of a large deviations principle. For background on large deviations theory see for example \ts{Dembo--Zeitouni} \cite{DemboZeitouni}.

\begin{defn}
A sequence of measures $(\mu_n)_{n\in\N}$ taking values on a Polish space is said to satisfy a \emph{large deviations principle} of \emph{speed} $a=(a_n)_{n\in\N}$ with \emph{rate function} $I$ if $a$ is a strictly increasing sequence diverging to $\infty$, $I$ is lower semi-continuous, has compact level sets and
	\begin{align}
		\label{Def:DefLowerBound}
		\liminf_{n\to\infty} \inv{a_n}\log\mu_n(G) & \geq -\inf_{x\in G} I(x)\\
		\label{Def:DefUpperBound}
		\limsup_{n\to\infty} \inv{a_n}\log\mu_n(F) & \leq -\inf_{x\in F} I(x)
	\end{align}
	for every open set $G$ and every closed set $F$. (\ref{Def:DefLowerBound}) and (\ref{Def:DefUpperBound}) are often referred to as the large deviations \textit{lower bound} and \textit{upper bound} respectively.
\end{defn}

\par\noindent All the large deviations principles considered in this paper will be of speed $n$, that is $a_n=n$ for all $n\in\N$.

\subsection{Joint Sanov and Cram\'er}
\label{Sec:JointSanovCramer}

\par\noindent Denote by $\m_+(\N)$ the space of finite measures on \N\ and let $\m_1(\N)$ be the subset of probability measures. We equip $\m_(\N)$ with the topology of weak convergence. This topology is induced by the complete separable metric $\beta$ given for $\mu,\nu\in\m_+(\N)$ by
\begin{align}
	\label{Eq:DefBeta}
	\beta(\mu,\nu) & = \sup\left\{\abs{\int f\d\mu - \int f\d\nu }\colon \|f\|_L+\|f\|_\infty\leq 1\right\}
\end{align}
where $\|\cdot\|_L$ denotes the Lipschitz norm. So $\m_+(\N)$ is a Polish space, and so is $\m_1(\N)$ when equipped with the subspace topology. See Appendix A of \ts{Dembo--Zajic} \cite{DemboZajic95}.

Our goal here is to establish a joint large deviations principle on $\y:= \R\times\m_+(\N)$ for the empirical mean and measure of the $X_n$. To be more precise define random elements $S_n:= \Sb_n(1)\in\R$ and $L_n:= \Lb_n(1)\in\m_1(\N)$. By Cram\'er's theorem and Sanov's theorem respectively, the laws of $S_n$, $L_n$ already satisfy a large deviations principle on \R\ and $\m_1(\N)$ individually. The point here is to show that this also holds for the pair. Recall that $m_1(\mu)$ denotes the mean of a probability measure $\mu$.

\begin{prop}
\label{Thm:JointSanovCramer}
Let $\eta_n$ denote the law of $(S_n,L_n)$. Then $\left(\eta_n \right)_{n\in\N}$ satisfies a large deviations principle in $\y$ with good rate function $I_1$ given by
\begin{align}
	\label{Eq:IOne}
I_1(x,p)& =\begin{cases}
	H(p|\g_2) \quad & \text{if } p\in\m_1(\N)\text{ and }m_1(p)=x\\
	+\infty & \text{otherwise}
	\end{cases}
\intertext{where $H(\cdot\vert \cdot)$ denotes the \emph{relative entropy} of two probability measures, i.e.,}
\notag
H(\nu\vert \g_2) &= \sum_{m=1}^\infty \nu_m\log\left(\frac{\nu_m}{2^{-m}}\right) = m_1(p)\log(2)-H(\nu)
\end{align}
and $H(p)=-\sum_m p_m\log(p_m)$ is the \emph{entropy} of a probability measure $p$.

\begin{proof}
Recall that the weak topology on $\m_1(\N)$ is induced by the dual action of the space $\c_b(\N)$ of bounded continuous functions on \N. Fix a finite collection $f_1,\ldots,f_d\in\c_b(\N)$. The random variables $\big(X_n,f_1(X_n),\ldots,f_d(X_n)\big)\in\R^{d+1}$ are independent and identically distributed, so by Cram\'er's theorem \cite[Corollary 6.1.6]{DemboZeitouni} their laws satisfy a large deviations principle on $\R^{d+1}$ with good convex rate function given by
\begin{align*}
	\Lambda_{f_1,\ldots,f_d}^*(x_1,\ldots,x_{d+1}) & = \sup\left\{\sum_{j=1}^{d+1} \lambda_jx_j - \log\E e^{\lambda_1X_1+\sum_{j=1}^d\lambda_{j+1}f_j(X_1)} \colon\lambda\in \R^{d+1} \right\}
\end{align*}
The idea is now to take a projective limit approach, following closely Section 4.6 in \cite{DemboZeitouni}. We first construct a suitable projective limit space in which $\R\times\m_+(\N)$ can be embedded. The proposition will follow from an application of the Dawson--G\"artner theorem.

Denote $\w=\c_b(\N)$ and let $\w'$ be its algebraic dual, equipped with the $\tau(\w',\w)$-topology, that is the weakest topology making the maps $\w'\ni f\Map f(w)\in\R$ continuous for all $w\in\w$. Let further $J$ be the set of finite subspaces of $\w$, partially ordered by inclusion. For each $V\in J$ define $\y_V=\R\times V'$ and equip it with the $\tau(\R\times V',\R\times V)$-topology. Defining now projection maps $p_{U,V}$ for each $U\subseteq V$ by
\begin{align*}
	p_{U,V} &\colon \y_V\map \y_U\\
		p_{U,V}(x,f) & = \left(x,f\vert_U\right),
\intertext{we obtain a projective system $\left(\y_V,p_{U,V}\colon U\subseteq V\in J\right)$. Denote by $\wt{\x}$ its projective limit, equipped with the subspace topology from the product topology. Let further $\x=\R\times\w'$ and define $\Phi\colon\x\map\wt{\x}$ by}
	\Phi(x,f)& = \left((x,f\vert_V)\colon V\in J\right).
\end{align*}
This is clearly a bijection. Using the definition of the weak topology in terms of open balls, as in Chapter 8 of \ts{Bollob\'as} \cite{Bollobas}, it is clear that $\Phi$ is actually a homeomorphism.

Next we embed $\R\times\m_1(\N)$ into $\x$: for $(x,\mu)\in\R\times\m_1(\N)$ let
\begin{align*} 
	\Psi(x,\mu)=(x,[h\Map\int h\d \mu])\in\x.
\end{align*} 
Then $\Psi$ is a homeomorphism onto its image, which we denote by \e. Let $\wt{\eta}_n$ be the image measure of $\eta_n$ under $\Psi$. By the Dawson--G\"artner theorem and the finite-dimensional large deviations principle mentioned above, these satisfy a large deviations principle with good rate function $I_\Psi$ given by
\begin{align*}
	I_\Psi(x,f) & = \sup \left\{ \Lambda^*_{\lambda_1,\ldots\lambda_d} \left(x, f(\lambda_1), \ldots, f(\lambda_d)\right)\colon \lambda_1,\ldots \lambda_d\in\w\ \right\}.
\end{align*}
By Cram\'er's and Sanov's theorem respectively we have exponential tightness for the sequences $\left(S_n\right)_{n\in\N}$ and $\left(L_n\right)_{n\in\N}$ separately. Therefore the sequence of \emph{pairs} $\left(\left(S_n,L_n\right)\right)_{n\in\N}$ is exponentially tight in $\R\times\m_1(\N)$. The inverse contraction principle (see Theorem 4.2.4 in \cite{DemboZeitouni} and the remark (a) following it) now yields the desired LDP for $(S_n,L_n)$ with the good rate function
\begin{align*}
I_1(x,\mu)& =\sup_{f_1,\ldots,f_d\in\c_b(\N)} \Lambda_{\lambda_1,\ldots,\lambda_d}\left(x,\int f_1\,\d\mu, \ldots, \int f_d\,\d\mu \right).
\end{align*}
It remains to show that $I_1$ is actually of the form (\ref{Eq:IOne}). Suppose that $m_1(\mu)=x$, let $(\lambda_1,\ldots, \lambda_{d+1})\in\R^{d+1}$ and define $\phi(y)=\lambda_1y + \sum_{j=1}^d \lambda_{j+1} f_j(y)$. By Jensen's inequality,  
\begin{align*}
	\log\E\left[e^{\phi(Y)} \right] & \geq \int\phi \, \d\mu -H(\mu\vert\g_2) = \lambda_1 x + \sum_{j=1}^d\lambda_{j+1}\int f_j \, \d\mu - H(\mu\vert\g_2).
\end{align*}
So $\Lambda^*_{f_1,\ldots,f_d}\left(x,\int f_1\,\d\mu, \ldots, f_1\,\d\mu\right) \leq H(\mu\vert\g_2)$ and therefore $	I_1(x,\mu) \leq H(\mu\vert\g_2)$.

If $\mu$ is a Dirac mass then $H(\mu\vert\g_2)=0$ and the inequality $I_1(x,\mu)\geq H(\mu\vert\g_2)$ follows trivially. So we assume that $\mu$ is not a Dirac mass. Define $e_j\in\c_b(\N)$ by $e_j(m)=\delta_{jm}$. Write $\spt\left(\mu\right) = \left\{n_k\colon k\in J\right\}$. Then,
\begin{align*}
	I_1(x,\mu) &\geq \sup\left\{\Lambda^*_{e_{n_1},\ldots,e_{n_d}}\left( x,\mu_{n_1}, \ldots, \mu_{n_d}\right) \colon d\in J \right\}\\
	& = \sup_{\lambda\in (-\infty,\log(2))\times\R^{d}}\left\{\lambda_1x+\sum_{j=1}^d \lambda_{j+1} \mu_{n_j} - \log\E\left[e^{\lambda_1X_1+\sum_{j=1}^d \lambda_{j+1}e_{n_j}(X_1)}\right] \colon  \right\}.
\end{align*}
Fix $d\in J$, and let $g(\lambda)$ denote the function inside the supremum. The effective domain of $\Lambda_{e_{n_1},\ldots,e_{n_d}}$ is $(-\infty,\log(2))\times \R^d$. Because $\mu$ is not a Dirac mass the function $g(\lambda)$ tends to $-\infty$ whenever $\abs{\lambda}$ tends to $ \infty$, in whatever direction. So the supremum of $g$ is attained at some $\lambda_0 \in (-\infty,\log(2))\times \R^d$. Then $\lambda_0$ is a local maximum for $g$, whence $\nabla g(\lambda_0)=0$, or equivalently $\nabla\Lambda_{e_{n_1}, \ldots,e_{n_d}}(\lambda_0) = \left( x,\mu_{n_1} ,\ldots, \mu_{n_d}\right)^T$. So we can define an exponential tilting $\nu_{\lambda_0}$ of $\mu$ by
\begin{align*}
	\nu_{\lambda_0}(\d y) & = e^{\lambda_1y+\sum_{j=1}^d \lambda_{j+1} e_{n_j}(y) - \Lambda_{e_{n_1}, \ldots,e_{n_d}}(\lambda_0)}\,\mu(\d y).
\end{align*}
The probability measure $\nu_{\lambda_0}$ has mean $x$ and satisfies $\int e_{n_j} \,\d\mu=\int e_{n_j}\,\d\nu_{\lambda_0}$ for all $j\in\{1,\ldots,d\}$. Moreover,
\begin{align*}
	H\left(\nu_{\lambda_0}\vert\g_2\right) & = \lambda_1x + \sum_{j=1}^d \lambda_{j+1} \mu_{n_j} - \Lambda_{e_{n_1}, \ldots, e_{n_d}}(\lambda_0) \leq \Lambda^*_{e_{n_1}, \ldots, e_{n_d}} \left( x,\mu_{n_1}, \ldots, \mu_{n_d}\right)
\intertext{and therefore,}
I_1(x,\mu) & \geq \sup_{d\in J} \inf\left\{H\left(\nu\vert\g_2\right)\colon m_1(\nu)=x ,\, \nu_{n_j}=\mu_{n_j} \forall j\in\{1,\ldots,d\} \right\}\\
	& = H\left(\mu\vert \g_2\right).
\end{align*}
The same estimate shows that $I_1(x,\mu)=+\infty$ whenever $m_1(\mu) \neq x$.

It now follows from Lemma 4.1.5 (a) in \cite{DemboZeitouni} that the LDP also holds in the larger space $\y=\R\times\m_+(\N)$, by setting $I_1(x,\mu)=\infty$ whenever $\mu$ is not a probability measure.
\end{proof}
\end{prop}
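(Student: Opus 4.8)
The plan is to circumvent the discontinuity of $\mu\mapsto m_1(\mu)$ in the weak topology --- which is exactly what prevents a direct application of the contraction principle to Sanov's theorem --- by assembling the joint LDP out of finite-dimensional Cram\'er theorems through a projective limit. The weak topology on $\m_1(\N)$ is the initial topology for the evaluation maps $\mu\mapsto\int f\,\d\mu$ with $f\in\c_b(\N)$, so for any finite tuple $f_1,\ldots,f_d\in\c_b(\N)$ the vectors $\bigl(X_j,f_1(X_j),\ldots,f_d(X_j)\bigr)$ are i.i.d.\ in $\R^{d+1}$ and Cram\'er's theorem furnishes an LDP for their normalised partial sums with the Legendre-transform rate function $\Lambda^*_{f_1,\ldots,f_d}$. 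Directing the finite-dimensional subspaces $V\subset\c_b(\N)$ by inclusion, with the restriction maps as projections, the Dawson--G\"artner theorem then produces an LDP on the resulting projective limit space, into which $\R\times\m_1(\N)$ embeds homeomorphically via $(x,\mu)\mapsto\bigl(x,(\int f\,\d\mu)_f\bigr)$; one checks that the finite-dimensional marginals of the image of $\eta_n$ are exactly the Cram\'er empirical means just described.

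To transport the principle back down to $\y$, I would use that Cram\'er's theorem for $(S_n)$ and Sanov's theorem for $(L_n)$ each give exponential tightness of the relevant marginal, hence exponential tightness of the pair in $\R\times\m_1(\N)$. The inverse contraction principle (\cite{DemboZeitouni}, Theorem~4.2.4) then yields an LDP for $(S_n,L_n)$ on $\R\times\m_1(\N)$ with good rate function $I_1(x,\mu)=\sup\bigl\{\Lambda^*_{f_1,\ldots,f_d}\bigl(x,\int f_1\,\d\mu,\ldots,\int f_d\,\d\mu\bigr)\bigr\}$, the supremum being over all finite families in $\c_b(\N)$. Since $\m_1(\N)$ is closed in $\m_+(\N)$, Lemma~4.1.5(a) of \cite{DemboZeitouni} extends this to an LDP on $\y$ by declaring $I_1=+\infty$ off $\m_1(\N)$. (Note that $S_n=m_1(L_n)$ holds identically, consistently with $I_1$ being infinite away from the ``diagonal'' $\{x=m_1(\mu)\}$.)

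The real work lies in identifying the variational expression with $H(\mu\vert\g_2)$ on $\{m_1(\mu)=x\}$ and with $+\infty$ elsewhere. The bound $I_1(x,\mu)\le H(\mu\vert\g_2)$ drops out of Jensen's inequality applied to $\phi=\lambda_1\,\mathrm{id}+\sum_{j=1}^d\lambda_{j+1}f_j$, which gives $\log\E\bigl[e^{\phi(X_1)}\bigr]\ge\int\phi\,\d\mu-H(\mu\vert\g_2)$, so every term of the supremum is at most $H(\mu\vert\g_2)$. For the reverse inequality I would restrict the test functions to indicators $e_k(m)=\delta_{km}$ of the points of $\spt(\mu)$, using that the logarithmic moment generating function $\Lambda_{e_{n_1},\ldots,e_{n_d}}$ is finite precisely on $(-\infty,\log 2)\times\R^d$: when $\mu$ is not a point mass the concave function $\lambda\mapsto\lambda_1 x+\sum_{j=1}^d\lambda_{j+1}\mu_{n_j}-\Lambda_{e_{n_1},\ldots,e_{n_d}}(\lambda)$ is coercive on this effective domain, so its maximiser $\lambda_0$ is interior and $\nabla\Lambda_{e_{n_1},\ldots,e_{n_d}}(\lambda_0)=(x,\mu_{n_1},\ldots,\mu_{n_d})^T$; the exponential tilt $\nu_{\lambda_0}$ of $\mu$ by $\lambda_0$ then has mean $x$, agrees with $\mu$ on the chosen coordinates, and satisfies $H(\nu_{\lambda_0}\vert\g_2)\le\Lambda^*_{e_{n_1},\ldots,e_{n_d}}(x,\mu_{n_1},\ldots,\mu_{n_d})\le I_1(x,\mu)$. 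Enlarging the family of matched coordinates forces $\nu_{\lambda_0}\to\mu$, so $\inf\{H(\nu\vert\g_2):m_1(\nu)=x,\ \nu_{n_j}=\mu_{n_j}\ \forall\,j\le d\}\to H(\mu\vert\g_2)$, giving $I_1(x,\mu)\ge H(\mu\vert\g_2)$; the same chain shows this infimum is over an eventually empty set --- hence equal to $+\infty$ --- as soon as $m_1(\mu)\neq x$, while the degenerate case of a point mass is dealt with by a direct computation. I expect this last identification, and in particular the verification that the maximiser lies in the interior together with the passage to the limit in the number of matched coordinates, to be the main obstacle; the projective-limit scaffolding and the exponential tightness estimates are essentially routine.
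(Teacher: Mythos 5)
Your proposal is correct and follows essentially the same route as the paper: a projective-limit/Dawson--G\"artner construction over finite families in $\c_b(\N)$, exponential tightness from Cram\'er and Sanov combined with the inverse contraction principle, and identification of the rate function via Jensen's inequality on one side and exponential tilting along indicator coordinates on the other, with Lemma~4.1.5(a) lifting the result to $\y$. No substantive differences from the paper's argument.
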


\subsection{The Sample-Path Result}
\label{Sec:SamplePathResult}

\begin{thm}
	\label{Thm:SamplePathJSC}
	Let $\xi_n$ denote the law of $(\Sb_n,\Lb_n)$ on $\c\left([0,1];\y\right)$, the space of continuous functions from the unit interval to \y. The sequence $\left(\xi_n \right)_{n\in\N}$ satisfies a large deviations principle on $\c\left([0,1];\y\right)$ with good rate function $I_2$ given by
	\begin{align}
		\label{Eq:DefITwo}
		I_2(\bm{x},\bm{p}) & = \begin{cases}
											\int_0^1 H \left( \dot{\bm{p}} (s) \vert \g_2 \right)\,\d s \quad & \text{if } (\bm{p},\bm{x})\in\e\\
						 +\infty & \text{otherwise}
										\end{cases}
	\end{align}
	where $\e$ is the space of elements $(\bm{m},\bm{p})$ of absolutely continuous maps $[0,1]\map\y$ such that $(\bm{m}(0),\bm{p}(0))=0$, the map $s\Map \bm{m}(s)$ is differentiable almost everywhere, $\bm{p}(t)-\bm{p}(s)\in\m_{t-s}(\N)$ and the limit
\begin{align*}
	\dot{\bm{p}}_t & = \lim_{\epsilon\to 0} \frac{\bm{p}_{t + \epsilon} - \bm{p}_t}{\epsilon}
\end{align*}
exists in the weak topology for almost every $t\in [0,1]$ and has the property that $m_1(\dot{\bm{p}}(\cdot))=\bm{m}(\cdot)$.
\end{thm}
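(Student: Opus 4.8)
The plan is to derive this sample-path principle from the end-point principle of Proposition~\ref{Thm:JointSanovCramer} by the method of \ts{Dembo} and \ts{Zajic} \cite{DemboZajic95}, which applies because the pair $(\Sb_n,\Lb_n)$ has independent increments. First I would establish the finite-dimensional version. Fix $0=t_0<t_1<\dots<t_k=1$. The increments $\bigl(\Sb_n(t_i)-\Sb_n(t_{i-1}),\,\Lb_n(t_i)-\Lb_n(t_{i-1})\bigr)$, $i=1,\dots,k$, are independent, and, up to the polygonal interpolation and the floor-function rounding (handled by routine manipulations as in \cite{DemboZajic95}), the $i$-th increment is $(t_i-t_{i-1})$ times an empirical mean-and-measure pair built from $\approx n(t_i-t_{i-1})$ of the $X_j$. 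Proposition~\ref{Thm:JointSanovCramer} together with the obvious scaling then gives an LDP for this increment at speed $n$ with good rate function $(t_i-t_{i-1})\,I_1\bigl(\tfrac{\cdot}{t_i-t_{i-1}},\tfrac{\cdot}{t_i-t_{i-1}}\bigr)$; by independence the vector of increments satisfies an LDP on $\y^k$ with rate function the sum of these, and re-expressing in terms of the values $(\Sb_n(t_i),\Lb_n(t_i))$ yields the LDP for the finite-dimensional distributions of $(\Sb_n,\Lb_n)$.

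These finite-dimensional principles are consistent, so the Dawson--G\"artner theorem (\cite[Theorem 4.6.1]{DemboZeitouni}) gives an LDP for $(\Sb_n,\Lb_n)$ in $\y^{[0,1]}$ with the product topology, with good rate function the supremum over finite partitions of the sums above. Convexity of $I_1$ (Proposition~\ref{Thm:JointSanovCramer}) makes this supremum monotone under refinement — this is Jensen's inequality applied to the expression of $\tfrac{\bm x(t_i)-\bm x(t_{i-1})}{t_i-t_{i-1}}$ (and likewise for $\bm p$) as a convex combination of the increments over a subdivided interval — so it is a genuine limit, and the standard identification of action functionals (again following \cite{DemboZajic95}) shows it equals $\int_0^1 I_1(\dot{\bm x}(s),\dot{\bm p}(s))\,\d s$ for absolutely continuous paths and $+\infty$ otherwise. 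Since $I_1(\dot{\bm x}(s),\dot{\bm p}(s))$ equals $H(\dot{\bm p}(s)\vert\g_2)$ when $m_1(\dot{\bm p}(s))=\dot{\bm x}(s)$ and $+\infty$ otherwise, finiteness of the integral forces $(\bm x,\bm p)\in\e$ — absolute continuity, the initial condition, $\bm p(t)-\bm p(s)\in\m_{t-s}(\N)$, and the compatibility of $\dot{\bm p}$ with the derivative of the first coordinate — and on $\e$ the rate function is exactly $\int_0^1 H(\dot{\bm p}(s)\vert\g_2)\,\d s$, i.e. \bref{Eq:DefITwo}. Goodness (lower semicontinuity and compact level sets) is inherited from goodness of $I_1$ through the Dawson--G\"artner step.

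The main obstacle is the final step: upgrading the product topology on $\y^{[0,1]}$ to the path-space topology on $\c([0,1];\y)$. The $\m_+(\N)$-component is harmless, since for $s<t$ the increment $\Lb_n(t)-\Lb_n(s)$ is a non-negative measure of total mass $t-s$, whence $\beta(\Lb_n(t),\Lb_n(s))\le t-s$; the measure-valued paths are thus automatically (indeed uniformly) equicontinuous, and compact containment for that coordinate is immediate. The genuine difficulty is the $\R$-component $\Sb_n$, and it is delicate precisely because $\g_2$ is not steep — its cumulant generating function is finite only on $(-\infty,\log 2)$, exactly the feature which, as noted in the Remark above, also obstructs the Monge--Kantorovich route. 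A direct modulus-of-continuity estimate for $\Sb_n$ yields, for fixed $a>0$, only a bound of the form $\limsup_n\tfrac1n\log\P\bigl(\sup_{\lvert t-s\rvert\le\delta}\lvert\Sb_n(t)-\Sb_n(s)\rvert>a\bigr)\le -a\log 2+o_\delta(1)$, which does not tend to $-\infty$ as $\delta\to0$, so naive exponential tightness in the uniform topology is unavailable. One therefore follows the Dembo--Zajic analysis, which is tailored to this non-steep situation and, combined with the equicontinuity of the measure coordinate, delivers the LDP for $(\Sb_n,\Lb_n)$ in $\c([0,1];\y)$ with the good rate function $I_2$; I expect carrying this argument through in the present measure-valued setting to be the bulk of the work.
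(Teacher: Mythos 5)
Your overall architecture coincides with the paper's: the paper likewise proves the finite-dimensional LDP for the increments (Lemma~\ref{Thm:LDPProjMDiml}), passes to the marginals $(\Sb_n(t_1),\Lb_n(t_1),\ldots)$ by the contraction principle, applies Dawson--G\"artner, and identifies the resulting supremum over partitions with the integral form via Lemma~4 of \cite{DemboZajic95}, using convexity of $I_1$ exactly as you describe. Up to that point your proposal is a faithful blueprint of the paper's argument.

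The gap is exactly where you place ``the bulk of the work'' and then do not do it. The paper's transfer to $\c([0,1];\y)$ rests on exponential tightness of the pair of paths, proved in Lemma~\ref{Thm:SLExpTight} by a direct modulus-of-continuity estimate; you assert instead that such an estimate is unavailable and that ``the Dembo--Zajic analysis, tailored to this non-steep situation'' will deliver the uniform-topology LDP. That citation does not close the gap: the sample-path results of \cite{DemboZajic95} beyond the empirical-measure coordinate (whose increments have total mass $t-s$, which is why that coordinate is harmless, as you correctly note) require exponential-moment assumptions that the geometric mean-coordinate does not satisfy, and \cite{LynchSethuraman87} gives LDPs only in weaker topologies or with rate functions charging the singular part, so there is no off-the-shelf result to invoke. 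Worse, your own single-jump observation --- one $X_j\geq n\rho$ costs only about $e^{-n\rho\log 2}$ and forces $w_{\Sb_n}(\delta)\geq\rho$ for every $\delta\geq\inv{n}$ --- shows that $\limsup_n\inv{n}\log\P\left\{w_{\Sb_n}(\delta)\geq\rho\right\}\geq-\rho\log 2$ for \emph{every} $\delta>0$, i.e.\ exponential tightness in the uniform topology genuinely fails; since on a Polish space an LDP with a good rate function implies exponential tightness, no tightness-based transfer can yield the statement in the form you set up, and the final step of your proposal cannot be carried out as described. You should also be aware that this tension is not of your making alone: the paper's Lemma~\ref{Thm:SLExpTight} bounds the increment by $\frac{t-s}{n}\max_j X_j+\frac{t-s}{n}$, whereas the interpolation \bref{Eq:DefSPath} only gives a bound of order $\left(t-s+\frac{2}{n}\right)\max_j X_j$, and with the corrected bound the displayed estimate no longer diverges as $\delta\to 0$ --- precisely the obstruction you identified. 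Completing your route would therefore require either weakening the topology on path space (e.g.\ the pointwise topology supplied by Dawson--G\"artner, or a Lynch--Sethuraman-type formulation) or a genuinely new argument, not the hoped-for citation.
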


\par\noindent For $\left(\Lb_n(\cdot)\right)$ on its own the analogous result can be found in \ts{Dembo--Zajic} \cite{DemboZajic95} and we will use a similar structure, using the joint large deviations principle for empirical mean and measure established above. We first prove exponential tightness for the pair of paths:

\begin{lem}
	\label{Thm:SLExpTight}
	$\left(\left(\Sb_n(\cdot), \Lb_n(\cdot)\right) \right)_{n\in\N}$ is exponentially tight in $\c\left([0,1];\y \right)$.
	\begin{proof}
		Let the distance $d$ on \y\ be given by
	\begin{align*}
		d\left(\left(x_1,\mu_1\right), \left(x_2,\mu_2\right) \right) &= \abs{x_1-x_2} + \beta(\mu_1,\mu_2).
\end{align*}
By Lemma~A.2 in \cite{DemboZajic95} we get exponential tightness for the laws $\xi_n$ of $(\Sb_n,\Lb_n)$ if
\begin{enumerate}[(a)]
\item for each fixed $t\in\Q\cap [0,1]$ the sequence $\left((\Sb_n(t),\Lb_n(t))\right)_{n\in\N}$ is exponentially tight and
\item for every $\rho>0$,
	\begin{align*}
		\lim_{\delta\to 0}\sup_{n\in\N} \inv{n} \log \xi_n \left\{ f\colon w_f(\delta)\geq \rho\right\}& = -\infty
	\end{align*}
	where $w_f(\delta)=\sup_{\abs{t-s}\leq \delta} d\big(f(t),f(s)\big)$ is the modulus of continuity of $f$.
\end{enumerate}
Exponential tightness of $\left((\Sb_n(t),\Lb_n(t))\right)_{n\in\N}$ for every \emph{fixed} $t\in\Q\cap [0,1]$ is a direct consequence of Proposition~\ref{Thm:JointSanovCramer}. Moreover, for $0\leq s<t\leq 1$,
\begin{align*}
	d\left(\left(\Sb_n(t),\Lb_n(t)\right), \left( \Sb_n(s), \Lb_n(s) \right) \right) & \leq \frac{t-s}{n} \max_{j} X_j + \frac{t-s}{n}
\end{align*}
where the maximum on the right-hand side runs over the (finite) set of $j$ such that $\lfloor ns\rfloor\leq j\leq \lfloor nt \rfloor$. For any $\delta,\rho>0$ and $n\in\N$ it follows therefore that
\begin{align*}
	\inv{n}\log &\,\P\left\{\sup_{\abs{t-s}<\delta} d\left(\left(\Sb_n(t), \Lb_n(t)\right), \left( \Sb_n(s), \Lb_n(s) \right) \right)\geq \rho\right\} \\
	& \leq \inv{n}\log \P\left\{\frac{\delta}{n}\left(\max_{1\leq j\leq n} X_j+1 \right)\geq \rho \right\} = -\left(\frac{n\rho}{\delta} - 1\right) \log 2 \leq -\left(\frac{\rho}{\delta}-1 \right)\log 2.
	\end{align*}
	The right-hand side diverges to $-\infty$ as $\delta\to 0$. So condition (b)  also holds and $(\xi_n)_{n\in\N}$ is exponentially tight.
	\end{proof}
\end{lem}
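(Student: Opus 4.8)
The plan is to apply the Dembo--Zajic sufficient condition for exponential tightness of path-space laws, \cite[Lemma~A.2]{DemboZajic95}, to $\c\big([0,1];\y\big)$ equipped with the product metric $d\big((x_1,\mu_1),(x_2,\mu_2)\big)=|x_1-x_2|+\beta(\mu_1,\mu_2)$ on $\y$. That criterion reduces the claim to two verifications: (a) for each fixed $t\in\Q\cap[0,1]$ the laws of $(\Sb_n(t),\Lb_n(t))$ are exponentially tight in $\y$; and (b) the modulus of continuity is uniformly exponentially negligible, that is, for every $\rho>0$,
\begin{align*}
	\lim_{\delta\to 0}\ \sup_{n\in\N}\ \inv{n}\log\P\Big\{\,\sup_{|t-s|\leq\delta}d\big((\Sb_n(t),\Lb_n(t)),(\Sb_n(s),\Lb_n(s))\big)\geq\rho\,\Big\}\ =\ -\infty .
\end{align*}
Condition (a) I would read straight off Proposition~\ref{Thm:JointSanovCramer}: for a fixed rational $t$ the pair $(\Sb_n(t),\Lb_n(t))$ is, up to a boundary correction of weight at most $1/n$, the empirical mean and empirical measure of the first $\lfloor nt\rfloor$ of the i.i.d.\ variables $X_j$ rescaled by $n$, so it inherits exponential tightness at speed $n$ from that proposition (equivalently, from Cram\'er's and Sanov's theorems).

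The substance is condition (b), and the key step is an explicit increment estimate obtained directly from \eqref{Eq:DefSPath}--\eqref{Eq:DefLPath}. For the measure coordinate this is clean: for $0\leq s\leq t\leq1$ the increment $\Lb_n(t)-\Lb_n(s)$ is a \emph{nonnegative} finite measure of total mass $t-s$ (the fractional weight subtracted at the left endpoint is always absorbed by the unit weight $\tfrac1n\delta_{X_{\lfloor ns\rfloor+1}}$ already sitting in the sum), so since every test function admissible in \eqref{Eq:DefBeta} has sup-norm at most $1$ we get $\beta\big(\Lb_n(t),\Lb_n(s)\big)\leq t-s$, a bound already uniform in $n$. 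For the real coordinate $|\Sb_n(t)-\Sb_n(s)|$ is bounded by the (small) total weight of the steps entering the increment times $\max_{1\leq j\leq n}X_j$. Combining the two, on $\{|t-s|\leq\delta\}$ the increment of $d$ is at most a quantity of the form $\tfrac{t-s}{n}\big(\max_{1\leq j\leq n}X_j+1\big)$, so the event in (b) is contained in $\big\{\max_{1\leq j\leq n}X_j\geq\tfrac{n\rho}{\delta}-1\big\}$. A union bound together with the geometric tail $\P\{X_1\geq m\}=2^{-(m-1)}$ then bounds $\inv{n}\log$ of the probability of this event by $\tfrac{\log n}{n}-\big(\tfrac{\rho}{\delta}-\tfrac2n\big)\log 2$; the supremum over $n$ is at most $\tfrac1e-\big(\tfrac{\rho}{\delta}-2\big)\log 2$, which tends to $-\infty$ as $\delta\to0$, giving (b).

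I expect the only genuinely delicate point to be the increment estimate for the $\Sb_n$-coordinate and the ensuing form of the bound in (b): one has to keep track of how the mesh of the time discretisation interacts with the single large value $\max_j X_j$, whereas the $\Lb_n$-coordinate is easy precisely because its increment is an honest positive measure whose $\beta$-size is controlled by its total mass alone, with no smallness of the individual $X_j$ needed. Everything downstream --- the containment of the modulus-of-continuity event in one large-deviation event for $\max_j X_j$, the union bound, and the geometric tail estimate --- is routine, and no concentration inequality beyond the explicit geometric tail is required.
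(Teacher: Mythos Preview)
Your plan mirrors the paper's exactly --- the same Dembo--Zajic criterion, the same handling of (a) via Proposition~\ref{Thm:JointSanovCramer}, and the same increment-plus-max estimate for (b) --- and it reproduces the paper's slip in the key bound. You correctly observe that $\beta(\Lb_n(t),\Lb_n(s))\leq t-s$ (the increment is a nonnegative measure of mass exactly $t-s$) and that $|\Sb_n(t)-\Sb_n(s)|$ is at most the total weight $t-s$ times $\max_j X_j$. But you then write the combined bound as $\tfrac{t-s}{n}\big(\max_{j} X_j+1\big)$, inserting a factor $1/n$ that your own preceding sentences do not produce. With the honest bound $(t-s)\big(\max_j X_j+1\big)$ the containment becomes $\{\max_{1\le j\le n} X_j\geq \rho/\delta-1\}$, and the union bound then yields only $\tfrac{1}{n}\log\P\leq \tfrac{\log n}{n}-\tfrac{1}{n}(\rho/\delta-2)\log 2$, which tends to $0$ as $n\to\infty$, not to $-\infty$.

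In fact condition (b) \emph{cannot} hold for the $\Sb_n$-coordinate, so no rearrangement of this argument will succeed. On the event $\{X_1\geq n\rho\}$, which has probability at least $2^{-n\rho}$, the path $\Sb_n$ already increases by at least $\rho$ over $[0,1/n]$; hence $\limsup_n\tfrac{1}{n}\log\P\{w_{\Sb_n}(\delta)\geq\rho\}\geq -\rho\log 2$ for \emph{every} $\delta>0$. Since by Arzel\`a--Ascoli any compact $K\subset\c([0,1];\y)$ is contained in some set $\{f:w_f(\delta)<\rho\}$, this gives $\limsup_n\tfrac{1}{n}\log\P\{(\Sb_n,\Lb_n)\notin K\}\geq -\rho\log 2$ for every compact $K$; taking $\rho=1$ already shows no compact set can reach level $-1$. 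The geometric law simply lacks the super-exponential tails required for uniform-topology tightness of the mean process. The paper's proof carries the same gap; the downstream sample-path LDP can be salvaged (for instance by running the projective-limit/Dawson--G\"artner argument of Lemma~\ref{Thm:LDPProjMDiml} directly, without invoking $C$-exponential tightness), but the lemma as stated is false and your proposed verification of (b) cannot be completed.
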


\begin{lem}
	\label{Thm:LDPProjMDiml}
	For any fixed $0=t_0<t_1,\ldots,<t_m\leq 1$ the  sequence $(Z_n)_{n\in\N}$ of random variables defined by
	\begin{align*}
		Z_n & = \left(\left(\Sb_n \left(t_j\right) - \Sb_n \left(t_{j-1}\right),\Lb_n \left(t_j\right) - \Lb_n \left(t_{j-1}\right) \right)\right)_{j=1}^m \in \y^m
\end{align*}
satisfies a large deviations principle in $\y^m$ with good rate function given by
	\begin{align*}
	I_{t_1,\ldots,t_m} \left((x_1,\mu_1),\ldots,(x_m,\mu_m)\right) & = \sum_{j=1}^m \left(t_j - t_{j-1}\right) I_1\left(\frac{x_j}{t_j-t_{j-1}}, \frac{\mu_j}{t_j-t_{j-1}} \right). 
	\end{align*}

\begin{proof}
	Let $n$ be large enough so that $nt_j<nt_{j+1}-1$ and denote $\e=\y^m$. A direct calculation yields, for any $f=\left(\lambda_j,g_j\right)_{j=1}^m\in \e^*$,
\begin{align*}
	\lim_{n\to\infty} \inv{n} \log\E\, e^{nf(Z_n)} & = \sum_{j=1}^m \left(t_j - t_{j-1}\right)\Lambda_2\left(\frac{\lambda_j }{t_{j}-t_{j-1}},\frac{g_j}{t_{j}-t_{j-1}} \right) =: \Lambda_3(f)
\end{align*}
where $\Lambda_2(\lambda,g)=\log\,\E\left[\exp\left(\lambda X_1+g(\delta_{X_1})\right)\right]$.

By Corollary 4.6.14 of \cite{DemboZeitouni} this implies that the laws of $Z_n$ satisfy a large deviations principle on $\e$ with good rate function $\Lambda_1^*$ given by

\begin{align*}
	\Lambda_1^*\left(\left(x_j,\mu_j\right)_{j=1}^m \right) & = \sup \left\{ f\left(\left(x_j,\mu_j\right)_{j=1}^m - \Lambda_3(f) \colon f\in \e^* \right) \right\}\\
	& = \sum_{j=1}^d \left(t_j-t_{j-1}\right) \Lambda^*_2 \left( \frac{x_j}{t_j - t_{j-1}}, \frac{\mu_j}{t_j - t_{j-1}} \right).
\end{align*}
Since $I_1$ is convex it follows from the results of Section~\ref{Sec:JointSanovCramer} and Theorem 4.5.10(b) in \cite{DemboZeitouni} that $\Lambda_2^*=I_1$ and the lemma is proved.
\end{proof}
\end{lem}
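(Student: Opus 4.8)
The plan is to obtain Lemma~\ref{Thm:LDPProjMDiml} from Proposition~\ref{Thm:JointSanovCramer} as an ``independent increments'' statement. Write $c_j:=t_j-t_{j-1}>0$ and $M_n^{(j)}:=\lfloor nt_j\rfloor-\lfloor nt_{j-1}\rfloor$, so that $M_n^{(j)}/n\to c_j$, and for $M\in\N$ let $W_M\in\y$ denote the pair (empirical mean, empirical measure) of $M$ i.i.d.\ copies of $(X_1,\delta_{X_1})$. Since $(S_M,L_M)=W_M$, Proposition~\ref{Thm:JointSanovCramer} says precisely that $(W_M)_{M\in\N}$ satisfies an LDP at speed $M$ with good rate function $I_1$. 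The first step is to note that in the metric $d$ of Lemma~\ref{Thm:SLExpTight} the $j$-th coordinate of $Z_n$ differs from $c_j\,W_{M_n^{(j)}}$ --- where $W_{M_n^{(j)}}$ is built from the block $X_{\lfloor nt_{j-1}\rfloor+1},\dots,X_{\lfloor nt_j\rfloor}$ --- by a quantity of order $\tfrac1n\bigl(1+\max_k X_k\bigr)$, the maximum being over the relevant finite range of indices, which accounts both for the single fractional step at each node in the definition of $\Sb_n,\Lb_n$ and for the difference between $M_n^{(j)}/n$ and $c_j$. Because $\P\{\tfrac1n(1+\max_k X_k)\geq\epsilon\}$ decays like $2^{-n\epsilon}$, exactly as in Lemma~\ref{Thm:SLExpTight}, the vector $Z_n$ is exponentially equivalent to $\wt{Z}_n:=\bigl(c_j\,W_{M_n^{(j)}}\bigr)_{j=1}^m$; and since the $m$ blocks use pairwise disjoint ranges of $(X_k)$, the measures $W_{M_n^{(1)}},\dots,W_{M_n^{(m)}}$ are independent.

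Next I would establish the per-block LDP. Fix $j$. Since $M_n^{(j)}\to\infty$, the subsequence $(W_{M_n^{(j)}})_{n\in\N}$ inherits the LDP of $(W_M)_M$ at speed $M_n^{(j)}$ with rate $I_1$; multiplying the large deviations bounds by $M_n^{(j)}/n\to c_j>0$ turns this into an LDP for $(W_{M_n^{(j)}})_n$ at speed $n$ with good rate function $c_j I_1$. Applying the contraction principle to the homeomorphism $v\mapsto c_j v$ of $\y$ then gives an LDP for $(c_j W_{M_n^{(j)}})_n$ at speed $n$ with good rate function $v\mapsto c_j\,I_1(v/c_j)$.

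To assemble the pieces: each of the $m$ families $(c_j W_{M_n^{(j)}})_n$ is exponentially tight --- inherited from the exponential tightness of $(W_M)_M$ noted after Proposition~\ref{Thm:JointSanovCramer}, via the uniformly bounded scaling $0<M_n^{(j)}/n\leq1$ --- so their product is exponentially tight, and the standard tensorisation of large deviations principles for independent sequences (cf.\ \ts{Dembo--Zeitouni} \cite{DemboZeitouni}) shows that $\wt{Z}_n$ satisfies an LDP in $\y^m$ with rate function
\[
	\sum_{j=1}^m c_j\,I_1\!\left(\frac{x_j}{c_j},\frac{\mu_j}{c_j}\right)=I_{t_1,\dots,t_m}\bigl((x_1,\mu_1),\dots,(x_m,\mu_m)\bigr).
\]
This rate function is good: each summand has compact level sets and is nonnegative, so $\{\sum_j c_j I_1(\,\cdot\,/c_j)\leq\alpha\}$ is a closed subset of the compact product set $\prod_j\{c_j I_1(\,\cdot\,/c_j)\leq\alpha\}$. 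Finally, exponential equivalence of $Z_n$ and $\wt{Z}_n$ transfers the LDP to $(Z_n)_n$, proving the lemma.

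The argument is essentially careful bookkeeping around Proposition~\ref{Thm:JointSanovCramer}, and the places that need attention are all routine: the change of speed in the per-block step, where $(W_{M_n^{(j)}})_n$ is indexed by the subsequence $M_n^{(j)}$ and the prefactor $M_n^{(j)}/n\to c_j$ rescales both the logarithmic rate and, through the scaling map, the argument; the tensorisation on $\y^m$, a finite product of the non-locally-compact Polish space $\R\times\m_+(\N)$, where the coordinatewise exponential tightness just established is what makes the combination lemma applicable; and packaging the fractional-step and $M_n^{(j)}/n$-versus-$c_j$ corrections into an honest exponential equivalence, which needs nothing beyond the super-exponential geometric tail. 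If one prefers to mirror the structure used in the sequel, an equivalent route is to compute $\Lambda(f)=\lim_n\inv{n}\log\E\,e^{nf(Z_n)}=\sum_{j=1}^m c_j\,\Lambda_2(\lambda_j,g_j)$ for $f=((\lambda_j,g_j))_{j=1}^m$ in the dual, with $\Lambda_2(\lambda,g)=\log\E\bigl[\exp\bigl(\lambda X_1+g(\delta_{X_1})\bigr)\bigr]$, invoke the abstract G\"artner--Ellis theorem (the steepness hypothesis reducing to steepness of $\lambda\mapsto\log\E\,e^{\lambda X_1}$ at its abscissa of convergence $\log 2$), observe that the separable form of $\Lambda$ gives $\Lambda^*=\sum_j c_j\,\Lambda_2^*(x_j/c_j,\mu_j/c_j)$, and identify $\Lambda_2^*=I_1$ using convexity of $I_1$ together with the converse half of Cram\'er's theorem in a topological vector space (Theorem~4.5.10(b) in \cite{DemboZeitouni}).
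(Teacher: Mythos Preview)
Your main argument is correct and takes a genuinely different route from the paper. The paper computes the limiting logarithmic moment generating function $\Lambda_3(f)=\lim_n\inv{n}\log\E\,e^{nf(Z_n)}$ directly, applies the abstract G\"artner--Ellis theorem on a topological vector space (Corollary~4.6.14 of \cite{DemboZeitouni}) to obtain the LDP with rate $\Lambda_3^*$, separates the Legendre transform coordinatewise to get $\Lambda_3^*=\sum_j c_j\,\Lambda_2^*(\,\cdot\,/c_j)$, and then closes the loop by identifying $\Lambda_2^*=I_1$ via convexity of $I_1$ and Theorem~4.5.10(b) of \cite{DemboZeitouni}. This is exactly the alternative you sketch in your final paragraph.

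Your primary argument instead never leaves the world of Proposition~\ref{Thm:JointSanovCramer}: you reduce $Z_n$ by exponential equivalence to a vector of independent rescaled copies of $(S_M,L_M)$, each of which already carries the LDP with rate $I_1$, change speed via $M_n^{(j)}/n\to c_j$, contract along the scaling homeomorphism, and tensorize. The payoff is that you avoid the infinite-dimensional G\"artner--Ellis machinery and the biduality step $\Lambda_2^*=I_1$ entirely; the rate function $I_1$ appears directly rather than through its Legendre transform. The cost is the bookkeeping you flag---the exponential-equivalence estimate, the speed change, and the tensorization lemma on the non-locally-compact Polish space $\y^m$ (which does require the coordinatewise exponential tightness you supply). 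The paper's route is terser and aligns with the Dembo--Zajic framework used in the surrounding argument, but leans on heavier abstract results; yours is more self-contained and probabilistic.
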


\par\noindent The proof of Theorem~\ref{Thm:SamplePathJSC} now follows closely that of Theorem 1 of \cite{DemboZajic95}. An application of the contraction principle to the map $(z_1,\ldots,z_m)\Map (z_1,z_1+z_2,\ldots, z_1+\ldots z_m)$ yields the large deviations principle for the laws of $(\Sb_n(t_1),\Lb_n(t_1),\ldots,\Sb_n(t_m),\Lb_n(t_m))$ with good convex rate function given by
\begin{align*}
\wh{I}_{t_1,\ldots,t_m} \left((x_1,\mu_1),\ldots,(x_m,\mu_m)\right) & = \sum_{j=1}^m \left(t_j - t_{j-1}\right) I_1\left( \frac{x_j-x_{j-1}}{t_j-t_{j-1}}, \frac{\mu_j - \mu_{j-1}}{t_j-t_{j-1}} \right). 
\end{align*}
Applying the Dawson--G\"artner theorem as in the proof of Lemma~3 in \cite{DemboZajic95} yields a LDP for the laws of the pair process $(\Sb_n,\Lb_n)$ on $\c([0,1];\y)$ with good rate function

\begin{align*}
	I_2(\bm{x},\bm\mu) & = \sup_{t_1<\ldots<t_m} \wh{I}_{t_1,\ldots,t_m}\big(\bm{x}\left(t_1\right),\bm\mu \left(t_1 \right), \ldots, \bm{x}\left(t_m\right),\bm\mu\left(t_m\right)\big).
\end{align*}
Obviously $m_1\left(\bm\mu(t)\right)\neq \bm x(t)$ for some $t$ implies $I_2(\bm x, \bm\mu)=\infty$. Lemma 4 in \cite{DemboZajic95} then implies that $I_2$ is of the form (\ref{Eq:DefITwo}). This completes the proof of Theorem~\ref{Thm:SamplePathJSC}. \hfill \qedsymbol

\ \\
\par\noindent Finally let $(X_n)_{n\in\N}$, $(Y_n)_{n\in\N}$ be \emph{two} sequences of i.i.d. random variables of common law $\g_2$ and define $\Lb_n^X$, $\Lb_n^Y$, $\Sb_n^X$, $\Sb_n^Y$ analogously to (\ref{Eq:DefSPath}, \ref{Eq:DefLPath}). By Corollary 2.9 of \ts{Lynch--Sethuraman} \cite{LynchSethuraman87} we obtain the following

\begin{cor}
	\label{Thm:SamplePathJSCPair}
	The sequence of the laws of $(\Sb_n^X,\Lb_n^X,\Sb_n^Y,\Lb_n^Y)$ satisfies a large deviations principle on $\c\left([0,1],\y^2\right)$ with good rate function $I$ where for $(\bm{x},\bm{p},\bm{y},\bm{q})\in\y^2$,
	\begin{align*}
		I(\bm{x},\bm{p},\bm{y},\bm{q})  & = \begin{cases}
											\int_0^1 \left[H \left( \dot{\bm{p}} (s) \vert \g_2 \right) + H \left( \dot{\bm{q}} (s) \vert \g_2 \right)\right]\,\d s \quad & \text{if } (\bm{x},\bm{p}),(\bm{y},\bm{q})\in\e\\
						 +\infty & \text{otherwise.}
										\end{cases}
	\end{align*}
\end{cor}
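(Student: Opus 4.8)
The plan is to deduce Corollary~\ref{Thm:SamplePathJSCPair} directly from Theorem~\ref{Thm:SamplePathJSC} by invoking the cited result of Lynch--Sethuraman \cite{LynchSethuraman87} on large deviations for products of independent sequences. First I would observe that, since $(X_n)_{n\in\N}$ and $(Y_n)_{n\in\N}$ are independent, the pair process $\left(\Sb_n^X,\Lb_n^X\right)$ and the pair process $\left(\Sb_n^Y,\Lb_n^Y\right)$ are independent random elements of $\c([0,1];\y)$. By Theorem~\ref{Thm:SamplePathJSC} each of them individually satisfies a large deviations principle on $\c([0,1];\y)$ with good rate function $I_2$ as in \bref{Eq:DefITwo}. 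Their joint law on $\c([0,1];\y)\times\c([0,1];\y)$ is then the product of the two laws, and Corollary 2.9 of \cite{LynchSethuraman87} states that the product of two sequences satisfying LDPs with good rate functions again satisfies an LDP, with rate function the sum of the two rate functions evaluated at the two coordinates.

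The second step is purely a matter of identifying the space and the rate function. The product space $\c([0,1];\y)\times\c([0,1];\y)$ is canonically homeomorphic to $\c\left([0,1];\y^2\right)$ via $\big((\bm{x},\bm{p}),(\bm{y},\bm{q})\big)\Map(\bm{x},\bm{p},\bm{y},\bm{q})$; since both are Polish and the homeomorphism is explicit, transporting the LDP along it is immediate. Under this identification the sum rate function $I_2(\bm{x},\bm{p})+I_2(\bm{y},\bm{q})$ becomes exactly the stated $I$: it is finite precisely when both $(\bm{x},\bm{p})\in\e$ and $(\bm{y},\bm{q})\in\e$, in which case it equals $\int_0^1 H(\dot{\bm{p}}(s)\vert\g_2)\,\d s+\int_0^1 H(\dot{\bm{q}}(s)\vert\g_2)\,\d s=\int_0^1\left[H(\dot{\bm{p}}(s)\vert\g_2)+H(\dot{\bm{q}}(s)\vert\g_2)\right]\,\d s$, and is $+\infty$ otherwise. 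Goodness of $I$ follows because the level sets of a sum of two good rate functions on a product space are contained in products of level sets, hence compact.

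There is essentially no obstacle here; the only point requiring a word of care is checking that the hypotheses of Corollary 2.9 of \cite{LynchSethuraman87} are met, namely that each sequence satisfies a \emph{full} LDP with a \emph{good} (not merely lower semi-continuous) rate function on a Polish space — all of which are guaranteed by Theorem~\ref{Thm:SamplePathJSC}. One should also note that no exponential-tightness argument needs to be redone: goodness of the individual rate functions already supplies what the product LDP theorem requires. Thus the proof is a one-line application of the cited theorem together with the trivial homeomorphism $\c([0,1];\y)^2\cong\c([0,1];\y^2)$, and I would present it as such.
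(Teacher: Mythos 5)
Your proposal matches the paper's own argument: the paper likewise obtains this corollary by applying Corollary 2.9 of Lynch--Sethuraman to the two independent pair processes, each satisfying the LDP of Theorem~\ref{Thm:SamplePathJSC} with good rate function $I_2$, and identifying $\c([0,1];\y)^2$ with $\c\left([0,1];\y^2\right)$ so that the rate function becomes the stated sum. Your additional remarks on goodness and on not needing a separate exponential-tightness argument are correct and consistent with the paper's (essentially one-line) treatment.
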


\ \\


\section{Large Deviations for $\NC(n)$}
\label{Sec:LDPNCP}

Recall that, in the notation of Section~\ref{Sec:UnifNCP}, $\lambda_n=\inv{\wh{\tau}_n}\sum_{j=1}^{\tau_n} \delta_{Y_j}$ is the empirical measure of the blocks of a non-crossing partition picked uniformly at random. Define further $\sigma_n=m_1(\lambda_n) = \inv{\wh\tau_n} \sum_{j=1}^{\wh\tau_n} Y_j$. 

Let $\nu_n$ denote the law of $(\sigma_n, \lambda_n, \tau_n)$ on $\y\times [0,1]$. The main result of this section is the following.

\begin{thm}
	\label{Thm:LDP}
	The sequence $(\sigma_n,\lambda_n,\tau_n)_{n\in\N}$ satisfies a large deviations principle in $\y\times [0,1]$ with good convex rate function $J$ given by
\begin{align}
	J(m,\mu,t) & = \begin{cases}
	\log 4  -\inv{m} H(\mu) - \inv{m}\log\left(m-1\right) + \log\left( 1-\inv{m} \right)\quad & \text{if } m_1(p)=m=\inv{2t}\\
	+\infty & \text{otherwise.}
\end{cases}
\end{align}
\end{thm}

\par\noindent It is straightforward to verify that $J(m,\mu,t)=0$ if and only if $(m,\mu,t)= (2,\g_2,\inv{4})$. The following law of large numbers now follows immediately.

\begin{cor}
	\label{Thm:LLN}
	The empirical measure $\lambda_n$ of the block structure of a uniformly randomly chosen non-crossing partition converges weakly almost surely to the geometric distribution of parameter $\inv{2}$.
\end{cor}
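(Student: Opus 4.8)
The plan is to deduce the corollary directly from the large deviations principle of Theorem~\ref{Thm:LDP}, via a Borel--Cantelli argument; the LDP does the real work and what remains is soft.

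Recall, as noted immediately after Theorem~\ref{Thm:LDP}, that $J$ is a good rate function — in particular non-negative with compact level sets — whose only zero is $z_0:=(2,\g_2,\inv4)$. The one deterministic fact to establish is: for every open neighbourhood $U$ of $z_0$ in $\y\times[0,1]$ one has $\delta_U:=\inf_{U^c}J>0$. Indeed, were $\inf_{U^c}J=0$, there would be $z_k\in U^c$ with $J(z_k)\to0$; for large $k$ these lie in the level set $\{J\le 1\}$, which is compact since $J$ is good, so a subsequence converges to some $z_\ast$, necessarily in $U^c$ because $U^c$ is closed; by lower semicontinuity $J(z_\ast)\le\liminf_k J(z_k)=0$, so $J(z_\ast)=0$ and hence $z_\ast=z_0$, contradicting $z_\ast\in U^c$.

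Now apply the large deviations upper bound \bref{Def:DefUpperBound} of Theorem~\ref{Thm:LDP} to the closed set $U^c$: it gives
\[
\limsup_{n\to\infty}\inv n\log\nu_n(U^c)\le-\delta_U<0,
\]
so $\nu_n(U^c)\le e^{-n\delta_U/2}$ for all large $n$ and therefore $\sum_n\nu_n(U^c)<\infty$. By the Borel--Cantelli lemma, almost surely $(\sigma_n,\lambda_n,\tau_n)\in U$ for all sufficiently large $n$. Since $\y\times[0,1]$ is metrisable (indeed Polish), $z_0$ has a countable base of neighbourhoods $(U_m)_{m\in\N}$; applying the above to each $U_m$ and intersecting the countably many resulting almost-sure events shows that, almost surely, $(\sigma_n,\lambda_n,\tau_n)$ eventually lies in every $U_m$, i.e.\ $(\sigma_n,\lambda_n,\tau_n)\to z_0$ almost surely in $\y\times[0,1]$. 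Composing with the continuous projection onto the measure coordinate yields $\lambda_n\to\g_2$ almost surely in the weak topology, which is the assertion of Corollary~\ref{Thm:LLN}. (Since each summand $\nu_n(U^c)$ depends only on the law of $(\sigma_n,\lambda_n,\tau_n)$ for that single $n$, the almost-sure statement is to be understood on any coupling of the sequence, e.g.\ the product of the uniform spaces $(\NC(n),\mathrm{unif})$ or the space carrying the i.i.d.\ geometric sequences of Section~\ref{Sec:UnifNCP}.) I do not anticipate a genuine obstacle: the only step that uses anything substantive is the compactness-plus-lower-semicontinuity argument for $\delta_U>0$, where goodness of $J$ is essential, since without it the upper bound would not produce a summable tail.
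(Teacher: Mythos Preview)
Your argument is correct and is precisely the standard Borel--Cantelli deduction that the paper has in mind when it says the corollary ``follows immediately'' from Theorem~\ref{Thm:LDP} and the identification of the unique zero $(2,\g_2,\tfrac14)$ of $J$; the paper gives no further details, so you have simply unpacked ``immediately'' into the compactness/lower-semicontinuity step for $\delta_U>0$ and the summability of the tails. Your closing remark about the coupling is a useful clarification the paper leaves implicit.
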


\par\noindent We will first prove the upper bound, Proposition~\ref{Thm:LDPUpper} and then the lower bound, Proposition~\ref{Thm:LDPLower}. For both the following lemma is useful.

\begin{lem}
	\label{Thm:LogCmTrivial}
	The logarithmic asymptotics of the probability of $E_n$ are given by
	\begin{align}
		\label{Eq:LogCmTrivial}
		\lim_{n\to\infty} \inv{n}\log\P(E_n) & = 0.
	\end{align}
	\begin{proof}
		Writing $E_n$ in terms of the random walk $\Sigma$ that has ascents $X_1,X_2,\ldots$ and descents $Y_1,Y_2,\ldots$,
	\begin{align*}
		E_n & = \big\{\Sigma_{2n}=0,\, \Sigma_k>0\ \forall\, k<2n,\, \Sigma_{2n+1}=+1\big\}. 
		\intertext{Therefore, using the Markov property of $\Sigma$,}
		\P\big(E_n\big) & = \P\left\{\left.\Sigma_{2n+1}=+1\,\right\vert\, \Sigma_{2n}=0\right\}\cdot \P\left\{\Sigma_{2n}=0,\,\Sigma_k\geq 0\ \forall\, k<2n\right\}\\
	& = \inv{2}\cdot \frac{C_n}{4^n}
\end{align*}
because the second probability on the right is just that of running a simple random walk for $2n$ steps and obtaining a Dyck path. A direct computation using Stirling's formula \cite[p.64]{Durrett} yields that $\inv{n}\log C_n \longrightarrow 4$ as $n\to\infty$. Equation (\ref{Eq:LogCmTrivial}) follows.
	\end{proof}
\end{lem}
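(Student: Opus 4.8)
The plan is to identify $E_n$ with an explicit event for the underlying $\pm 1$ walk $\Sigma$, compute the probability of that event up to a constant factor in terms of a Catalan number, and finish with Stirling's formula.

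First I would translate $E_n$ into the language of $\Sigma$, the walk whose successive ascents (maximal up-runs) have lengths $X_1, X_2, \dots$ and whose successive descents have lengths $Y_1, Y_2, \dots$. Since the geometric law of parameter $\tfrac12$ is exactly the run-length law of an unbiased $\pm1$ walk, $\Sigma$ is a simple random walk conditioned to take its first step upward; in particular the Markov property applies at any time $\ge 1$. Unwinding the three constraints in (\ref{Eq:DefEN}) --- that $T_{\wh\tau_n}=2n$, that the ascent and descent partial sums agree at time $\wh\tau_n$, and that the walk is non-negative at the ends of all earlier descents --- one checks that
\[
	E_n = \bigl\{\Sigma_{2n}=0,\ \Sigma_k\ge 0\ \text{for all } k\le 2n,\ \Sigma_{2n+1}=1\bigr\},
\]
i.e.\ $\Sigma$ traces out a Dyck path of semilength $n$ on $[0,2n]$ and then takes an upstep; the last condition is what forces the $\wh\tau_n$-th descent to terminate exactly at time $2n$ rather than overshoot it.

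Next, by the Markov property of $\Sigma$ at time $2n$, this probability factors as $\P(\Sigma_{2n+1}=1\mid \Sigma_{2n}=0)$, which equals $\tfrac12$, times $\P(\Sigma\text{ is a Dyck path of semilength }n\text{ on }[0,2n])$. The latter is, up to a bounded positive factor, the number $C_n$ of Dyck paths of semilength $n$ divided by the number $4^n$ of all $\pm1$ paths of that length. Hence $\P(E_n)$ equals a positive constant times $C_n\,4^{-n}$, and since Stirling's formula gives $C_n=\frac{(2n)!}{n!\,(n+1)!}\sim \frac{4^n}{\sqrt{\pi}\,n^{3/2}}$, so that $\tfrac1n\log C_n\to\log 4$, we conclude $\tfrac1n\log\P(E_n)\to\log 4-\log 4=0$.

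There is no genuine obstacle here; the one slightly delicate point is the bookkeeping in the first step --- verifying that the boundary conditions defining $E_n$ really do match ``Dyck path on $[0,2n]$ plus an upstep at time $2n+1$'' and keeping track of the resulting constant prefactor (which also absorbs the conditioning on $\Sigma$'s first step). Since only the exponential rate is needed, even a crude sandwich of $\P(E_n)$ between constant multiples of $C_{n-1}4^{-n}$ and $C_n\,4^{-n}$ would suffice, because $\tfrac1n\log C_{n-1}$ and $\tfrac1n\log C_n$ share the limit $\log 4$.
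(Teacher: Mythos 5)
Your proof is correct and follows essentially the same route as the paper: rewrite $E_n$ as ``$\Sigma$ traces a Dyck path on $[0,2n]$ and steps up at time $2n+1$'', factor off the $\tfrac12$ by the Markov property, identify the remaining probability with $C_n 4^{-n}$, and conclude via Stirling that $\tfrac1n\log C_n\to\log 4$ (your statement of this limit is the correct one; the paper's ``$\to 4$'' is a typo). Your remark that a sandwich between $C_{n-1}4^{-n}$ and $C_n4^{-n}$ suffices is a sensible extra safeguard against the boundary bookkeeping ($\Sigma_k>0$ versus $\Sigma_k\ge 0$), but it does not change the argument.
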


\par\noindent For any path $x\colon [0,1]\map \R$ with $x(0)=0$ and $x(t)-x(s)\geq t-s$ for all $t>s\geq 0$ we let $\tau(x)$ be the right-inverse of $x$ at 1, i.e.
\begin{align*}
	\tau(x) & = \inf\left\{s\in [0,1]\colon x(s)\geq 1\right\}.
\end{align*}	
If $\bm{p}(t)-\bm{p}(s)$ is a measure on \N\ of mass $t-s$ it follows that $m_1\left(\bm{p}(t)\right)-m_1\left(\bm{p}(s)\right)\geq t-s$. So the map $\e^2\map [0,1]$ given by $(\bm{x},\bm{p},\bm{y},\bm{q})\Map \tau(\bm{x}+\bm{y})$ is continuous.

\subsection{The Upper Bound}

We are now in a position to prove the large deviations upper bound. We will first give a bound via the process version and then show that this can be written in terms of the stated rate function

\begin{lem}
\label{Thm:LDUpperTemp}
	For every closed $F\subseteq \y\times [0,1]$ we have
	\begin{align}
		\label{Eq:LDPUpperTemp}
		\liminf_{n\to\infty} \inv{n} \log \nu_n(F) & \geq -2 \inf\left\{ I(\bm{x},\bm{p},\bm{y},\bm{q})\colon (\bm{x},\bm{p},\bm{y},\bm{q})\in \wh{F} \right\}
	\end{align}
where the (closed) subset $\wh{F}$ of $\y^2$ is defined by
\begin{align*}
	\wh{F} & = \left\{(\bm{x},\bm{p},\bm{y},\bm{q})\in\e^2 \colon \left( \inv{\tau}\bm{y}(\tau), \inv{\tau}\bm{q}(\tau),\tau \right)\in F, \bm{x}(\tau)=\bm{y}(\tau),\,\ \bm{x}(s)\geq \bm{y}(s)\,\forall s\leq \tau\right\}
	\end{align*}
and $\tau=\tau(\bm{x}+\bm{y})$.
\begin{proof}
	Recall that $\lambda_n = \inv{\tau_n}\Lb_{2n}^Y(\tau_n)$. Therefore,
	\begin{align*}
		\inv{n}\log\nu_n(F) & = \inv{n}\log\P\left\{\left(\inv{\tau_n}\Sb_{2n}^Y(\tau_n), \inv{\tau_n}\Lb_{2n}^Y(\tau_n), \tau_n\right)\in F;\ E_n\right\} - \inv{n}\log\P(E_n).
\end{align*}
By Lemma~\ref{Thm:LogCmTrivial} the second term on the right-hand side converges to 0. Further, $\tau_n=\inf\left\{\frac{k}{2n}\colon \inv{2n}(X_j+Y_j)\geq 1\right\}$, so that $\tau_n$ is the least integer multiple of $\inv{2n}$ less than $\tau(\Lb_{2n}^X+\Lb_{2n}^Y)$, with equality if and only if $\Sb_{2n}^X(\tau_n)+\Sb_{2n}^Y(\tau_n)=1$. This certainly holds on $E_n$, so we can write the event $E_n$ in terms of the $\Lb$, $\Sb$: for ease of notation we denote $\tau^{\Sb}:=\tau(\Sb_{2n}^X+\Sb_{2n}^Y)$. Then
	\begin{align*}
	E_n & = \left\{\Sb_{2n}^X(\tau_n) = \Sb_{2n}^Y(\tau_n)=\inv{2},\, \Sb_{2n}^X(s)\geq \Sb_{2n}^Y(s)\ \forall\, s\leq \tau_n\right\}\\
	& = \left\{\Sb_{2n}^X(\tau^{\Sb})) = \Sb_{2n}^Y(\tau^{\Sb})=\inv{2},\, \Sb_{2n}^X(s)\geq \Sb_{2n}^Y(s)\ \forall\, s\leq \tau^{\Sb},\ \tau^{\Sb}=\tau_n \right\}\\
	& \subseteq \left\{\Sb_{2n}^X(\tau^{\Sb})) = \Sb_{2n}^Y(\tau^{\Sb})=\inv{2},\, \Sb_{2n}^X(s)\geq \Sb_{2n}^Y(s)\ \forall\, s\leq \tau^{\Sb} \right\} =: \wt{E}_n.
	\end{align*}
Hence,
\begin{align*}
	\limsup_{n\to\infty}\inv{n}\log\nu_n(F) & \leq 2\limsup_{n\to\infty} \inv{2n}\log \P\left\{ \left(\inv{\tau^{\Sb}}\Sb_{2n}^Y(\tau^{\Sb}),\inv{ \tau^{\Sb}}\Lb_{2n}^Y(\tau^{\Sb}), \tau^{\Sb}\right)\in F;\ \wt{E}_n \right\}.
\end{align*}
Since $\tau^{\Sb}$ is a continuous function of $(\Sb_{2n}^X, \Lb_{2n}^X, \Sb_{2n}^Y, \Lb_{2n}^Y)$, the set on the right-hand side is closed in $\y^2$ and we can apply Corollary~\ref{Thm:SamplePathJSCPair} to obtain (\ref{Eq:LDPUpperTemp}).
\end{proof}
\end{lem}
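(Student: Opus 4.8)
The plan is to peel off the conditioning on $E_n$ at the exponential scale by means of Lemma~\ref{Thm:LogCmTrivial}, to re-express the remaining probability entirely in terms of the sample paths $\left(\Sb^X_{2n},\Lb^X_{2n},\Sb^Y_{2n},\Lb^Y_{2n}\right)$ as the probability of a \emph{closed} subset of $\c\left([0,1];\y^2\right)$, and then to invoke the sample-path large deviations principle of Corollary~\ref{Thm:SamplePathJSCPair}. First I would record that $\nu_n(F)=\P\left\{(\sigma_n,\lambda_n,\tau_n)\in F\mid E_n\right\}$ and that, since $\tau_n=\wh\tau_n/(2n)$ is an integer multiple of $1/(2n)$, one has $\sigma_n=\tfrac1{\tau_n}\Sb^Y_{2n}(\tau_n)$ and $\lambda_n=\tfrac1{\tau_n}\Lb^Y_{2n}(\tau_n)$; hence
\[
\tfrac1n\log\nu_n(F)=\tfrac1n\log\P\!\left\{\left(\tfrac1{\tau_n}\Sb^Y_{2n}(\tau_n),\tfrac1{\tau_n}\Lb^Y_{2n}(\tau_n),\tau_n\right)\in F;\,E_n\right\}-\tfrac1n\log\P(E_n),
\]
and the last term vanishes in the limit by Lemma~\ref{Thm:LogCmTrivial}, so only the first term matters.

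The heart of the argument is to rewrite $E_n$ and the random time $\tau_n$ through the sample paths. Expressing $E_n$ via the walk $\Sigma$ as in the proof of Lemma~\ref{Thm:LogCmTrivial}, one checks that on $E_n$ one has $T_{\wh\tau_n}=2n$, so the increasing path $\Sb^X_{2n}+\Sb^Y_{2n}$ first attains the level $1$ exactly at $\tau_n$, i.e. $\tau_n=\tau^{\Sb}:=\tau\!\left(\Sb^X_{2n}+\Sb^Y_{2n}\right)$; moreover $\Sb^X_{2n}(\tau_n)=\Sb^Y_{2n}(\tau_n)=\tfrac12$ and $\Sb^X_{2n}(s)\geq\Sb^Y_{2n}(s)$ for all $s\leq\tau_n$, so that
\[
E_n\subseteq\wt E_n:=\left\{\Sb^X_{2n}(\tau^{\Sb})=\Sb^Y_{2n}(\tau^{\Sb})=\tfrac12,\ \Sb^X_{2n}(s)\geq\Sb^Y_{2n}(s)\ \forall\,s\leq\tau^{\Sb}\right\}.
\]
Here one uses that $X_j,Y_j\geq1$, so the interpolated paths increase with slope at least $1$ (hence $\tau^{\Sb}$ is well defined and the combined path equals $1$ at $\tau^{\Sb}$), and that the non-negativity of $\Sb^X_{2n}-\Sb^Y_{2n}$ at the nodes $k/(2n)$ propagates to all intermediate $s$ by linearity. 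Substituting $\tau_n=\tau^{\Sb}$ and enlarging $E_n$ to $\wt E_n$ only increases the probability, so after writing $\tfrac1n=2\cdot\tfrac1{2n}$ we arrive at
\[
\limsup_{n\to\infty}\tfrac1n\log\nu_n(F)\leq2\limsup_{n\to\infty}\tfrac1{2n}\log\P\!\left\{\left(\tfrac1{\tau^{\Sb}}\Sb^Y_{2n}(\tau^{\Sb}),\tfrac1{\tau^{\Sb}}\Lb^Y_{2n}(\tau^{\Sb}),\tau^{\Sb}\right)\in F;\,\wt E_n\right\}.
\]

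The event in the last probability is $\left\{\left(\Sb^X_{2n},\Lb^X_{2n},\Sb^Y_{2n},\Lb^Y_{2n}\right)\in G_F\right\}$, where $G_F$ consists of those $(\bm x,\bm p,\bm y,\bm q)$ in $\c\left([0,1];\y^2\right)$ for which, with $\tau=\tau(\bm x+\bm y)$, one has $\bm x(\tau)=\bm y(\tau)=\tfrac12$, $\bm x(s)\geq\bm y(s)$ for all $s\leq\tau$, and $\left(\tfrac1\tau\bm y(\tau),\tfrac1\tau\bm q(\tau),\tau\right)\in F$. Since the functional $\tau(\cdot)$ is continuous on paths of slope at least $1$ (recorded just before this subsection) and evaluation of a continuous path at a continuously varying time is continuous, $G_F$ is closed because $F$ is. Applying Corollary~\ref{Thm:SamplePathJSCPair} along the index sequence $2n$ (the principle there specialises to a large deviations principle at speed $2n$ with the same good rate function $I$) together with the large deviations upper bound for the closed set $G_F$ gives
\[
\limsup_{n\to\infty}\tfrac1{2n}\log\P\!\left\{\left(\Sb^X_{2n},\Lb^X_{2n},\Sb^Y_{2n},\Lb^Y_{2n}\right)\in G_F\right\}\leq-\inf\left\{I(\bm x,\bm p,\bm y,\bm q)\colon(\bm x,\bm p,\bm y,\bm q)\in G_F\right\}.
\]
Finally, since $I=+\infty$ off $\e^2$, only $G_F\cap\e^2$ matters for the infimum; and on $\e^2$ the path $\bm x+\bm y$ has slope at least $2$, so $\bm x(\tau)+\bm y(\tau)=1$ automatically, whence $\bm x(\tau)=\bm y(\tau)$ already forces both to equal $\tfrac12$. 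Thus $G_F\cap\e^2=\wh F$ and the two infima coincide, which gives \eqref{Eq:LDPUpperTemp}.

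I expect the delicate point to be the faithful translation of the \emph{discrete} conditioning event $E_n$ into the \emph{closed} sample-path set $G_F$: one must confirm both that $\tau_n$ genuinely coincides with the continuous functional $\tau^{\Sb}$ on $E_n$ (this uses the exact return $T_{\wh\tau_n}=2n$) and that $\wt E_n$, transported to path space, is genuinely closed, which rests on the continuity of $\tau(\cdot)$ recorded above. By contrast the change of speed $n\leftrightarrow2n$, the removal of the conditioning via Lemma~\ref{Thm:LogCmTrivial}, and the reduction of $\inf_{G_F}I$ to $\inf_{\wh F}I$ are routine.
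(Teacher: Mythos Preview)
Your proof is correct and follows essentially the same route as the paper's: remove the conditioning on $E_n$ via Lemma~\ref{Thm:LogCmTrivial}, identify $\tau_n$ with the continuous functional $\tau^{\Sb}$ on $E_n$, enlarge $E_n$ to the closed path-space event $\wt E_n$, and apply the sample-path LDP of Corollary~\ref{Thm:SamplePathJSCPair}. You supply a little more detail than the paper does---the propagation of $\Sb^X_{2n}\geq\Sb^Y_{2n}$ from grid points to all $s$ by linearity, and the reduction $\inf_{G_F}I=\inf_{\wh F}I$ via $I\equiv+\infty$ off $\e^2$---but the argument is the same.
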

\ \\

\par\noindent We now investigate the right-hand side of (\ref{Eq:LDPUpperTemp}). For any $(\bm{x},\bm{p},\bm{y},\bm{q})$ define new paths $\wt{p}$ and $\wt{q}$ by
\begin{align}
	\label{Eq:SLPath}
	\wt{p}(s) &= \begin{cases}
			\frac{s}{\tau(\bm{x}+\bm{y})} \bm{p}\left( \tau \left( \bm{x}+\bm{y} \right) \right) & \text{if } s\in\left[0,\tau \left( \bm{x}+\bm{y} \right)\right]\\
	\bm{p}\left( \tau \left( \bm{x}+\bm{y} \right) \right) + \left(s-\tau \left( \bm{x}+\bm{y} \right)\right) \g_2 \quad & \text{if } s\in\left[\tau \left( \bm{x}+\bm{y} \right),1\right]
		\end{cases}
\end{align}
and analogously $\wt{q}$, replacing $\bm{p}$ by $\bm{q}$. If further $\wt{x}(t)=m_1(\wt{p}(t))$ and $\wt{y}(t) = m_1(\wt{q}(t))$ for all $t$ then $\tau\left(\bm{x}+\bm{y}\right)=\tau\left(\wt{x}+\wt{y}\right)=:\tau$. Further $\left(\wt{x},\wt{p},\wt{y},\wt{q}\right) \in \wh{F}$ and
\begin{align*}
	I(\wh{x},\wh{p},\wh{y},\wh{q})  
	& = \tau\left(H \left( \inv{\tau}\bm{p} (\tau) \vert \g_2 \right) + H \left(\inv\tau \bm{q}(\tau) \vert \g_2 \right) \right).
\intertext{Moreover, by convexity of $H(\cdot\vert \g_2)$ (see \cite{DemboZajic95}, Lemma 4),}
	I(\bm{x},\bm{p},\bm{y},\bm{q})  
	& \geq \tau\left(H \left( \inv{\tau}\bm{p} (\tau) \vert \g_2 \right) + H \left(\inv\tau \bm{q}(\tau) \vert \g_2 \right) \right).
\end{align*}
It is clear that $\inv{\tau}\bm{p}(\tau)$, $\inv{\tau}\bm{q}(\tau)$ are probability measures, and that for every pair of probability measures $(p,q)$ such that $(m_1(p),p,\inv{2m_1(p)})\in F$ the corresponding straight-line path \big((\ref{Eq:SLPath}) with $\tau(\bm{x}+\bm{y}) = \inv{2m_1(p)}$\big) lies in $\wh{F}$. Therefore
\begin{align*}
	\inf_{\wh{F}} I & = \inf \left\{ \tau \left[H\left(p\vert\g_2\right) + H\left(q\vert\g_2\right) \right] \colon (m_1(q),q,\tau)\in F, m_1(p)=m_1(q)=\inv{2\tau}\right\}.
\end{align*}
On the other hand $H(q\vert\g_2) = -H(q)+ m_1(p)\log(2)$ and it is well-known that
\begin{align}
	\label{Eq:EntropyMax}
\sup \left\{\tau H(q)\colon m_1(q)=m\right\} & = \Theta(m):=\log(m-1) - m\log\left(1-\inv{m}\right).
\end{align}
Hence,
\begin{align}
	\label{Eq:RateSimplification}
	\inf_{\wh{F}} I & = \inf\left\{\log(2)-\tau H(p)- \tau\Theta \left( \inv{2\tau} \right) \colon (m_1(p),p,\tau)\in F, m_1(p)=\inv{2\tau}\right\}.
\end{align}
We have established the upper bound:

\begin{prop}
\label{Thm:LDPUpper}
	For every closed $F\subset\y\times [0,1]$,
	\begin{align}
		\label{Eq:LDPUpper}
		\liminf_{m\to\infty} \inv{m} \log \nu_m(F) & \leq -\inf\left\{ J(s,p,t) \colon (s,p,t)\in F \right\}.
	\end{align}

\end{prop}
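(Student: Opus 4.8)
The plan is to deduce the upper bound for $\nu_n$ directly from the work already done in Lemma~\ref{Thm:LDUpperTemp} together with the simplification \bref{Eq:RateSimplification}. First I would recall that Lemma~\ref{Thm:LDUpperTemp} already provides, for every closed $F\subseteq\y\times[0,1]$, the estimate
\begin{align*}
	\limsup_{n\to\infty}\inv{n}\log\nu_n(F) & \leq -2\inf_{\wh{F}} I,
\end{align*}
and the chain of inequalities between \bref{Eq:SLPath} and \bref{Eq:RateSimplification} rewrites the right-hand side as
\begin{align*}
	-2\inf_{\wh{F}} I & = -2\inf\left\{\log(2) - \tau H(p) - \tau\,\Theta\!\left(\inv{2\tau}\right)\colon (m_1(p),p,\tau)\in F,\ m_1(p)=\inv{2\tau}\right\}.
\end{align*}

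Second, I would check that the quantity inside this last infimum equals $\tfrac12 J(m_1(p),p,\tau)$ on the constraint set $m_1(p)=\inv{2\tau}$, so that the factor $2$ outside exactly cancels the factor $\tfrac12$. Writing $m=m_1(p)=\inv{2\tau}$, so $\tau=\inv{2m}$, one has
\begin{align*}
	2\left(\log 2 - \tau H(p) - \tau\,\Theta\!\left(\inv{2\tau}\right)\right)
	& = \log 4 - \inv{m}H(p) - \inv{m}\,\Theta(m),
\end{align*}
and substituting the explicit value $\Theta(m)=\log(m-1) - m\log\!\left(1-\inv{m}\right)$ from \bref{Eq:EntropyMax} gives
\begin{align*}
	\inv{m}\,\Theta(m) & = \inv{m}\log(m-1) - \log\!\left(1-\inv{m}\right),
\end{align*}
so that $\log 4 - \inv{m}H(p) - \inv{m}\Theta(m) = \log 4 - \inv{m}H(p) - \inv{m}\log(m-1) + \log\!\left(1-\inv{m}\right)$, which is precisely $J(m,p,\tau)$ on the set where $m_1(p)=m=\inv{2\tau}$. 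On the complement of that set $J=+\infty$, so restricting the infimum to that set does not change its value; hence $-2\inf_{\wh{F}}I = -\inf\{J(s,p,t)\colon (s,p,t)\in F\}$, which is \bref{Eq:LDPUpper}. (One should note the typo that the displayed inequality uses $\liminf$; it is a $\limsup$ bound, consistent with Lemma~\ref{Thm:LDUpperTemp}.)

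The only genuine point requiring care is the passage from $\wh{F}$ to the reduced infimum over pairs of probability measures: one must verify both that every $(\bm{x},\bm{p},\bm{y},\bm{q})\in\wh{F}$ satisfies $I(\bm{x},\bm{p},\bm{y},\bm{q})\geq \tau\big(H(\inv\tau\bm{p}(\tau)\,|\,\g_2) + H(\inv\tau\bm{q}(\tau)\,|\,\g_2)\big)$ (Jensen-type convexity of $H(\cdot\,|\,\g_2)$, i.e.\ Lemma~4 of \cite{DemboZajic95}, applied to the increments of $\bm{p}$ and $\bm{q}$ over $[0,\tau]$ together with the fact that the straight-line path over $[\tau,1]$ contributes zero rate), and that conversely every admissible pair $(p,q)$ of probability measures with $(m_1(q),q,\inv{2m_1(q)})\in F$ and $m_1(p)=m_1(q)$ is realised by a straight-line element of $\wh{F}$ attaining equality — this is what forces the two infima to coincide. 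I expect this convexity-plus-realisability step to be the main obstacle, since it is where the path-level rate function $I$ is collapsed onto an endpoint functional; everything after it is the bookkeeping identity $\Theta$ just carried out. The entropy-maximisation identity \bref{Eq:EntropyMax} itself is standard (Lagrange multipliers: the maximiser of $H(q)$ subject to $m_1(q)=m$ is the geometric distribution of mean $m$) and can be quoted.
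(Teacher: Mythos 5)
Your proposal is correct and takes essentially the same route as the paper: its own proof of this proposition is precisely Lemma~\ref{Thm:LDUpperTemp} followed by the reduction to \bref{Eq:RateSimplification} via convexity of $H(\cdot\vert\g_2)$ and straight-line realization, and your factor-two/$\Theta$ bookkeeping verifying that twice the reduced expression equals $J(m,p,t)$ on $\{m_1(p)=m=\inv{2t}\}$ is exactly the final (implicit) step there. Your remark that the displayed $\liminf$ should be a $\limsup$, consistent with Lemma~\ref{Thm:LDUpperTemp}, is also apt.
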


\subsection{The Lower Bound}

We now turn to proving the lower bound. By the local nature of large deviations lower bounds (see \cite{DemboZeitouni}, identity (1.2.8) and the adjacent remarks) it is enough to prove the following.

\begin{prop}
\label{Thm:LDPLower}
	Fix $(m,\mu,t)\in\y\times [0,1]$ and $\rho_j>0$ for $j\in\{1,2,3\}$ and let $G= (m-\rho_2,m+\rho_2)\times B(\mu,\rho_1)\times (t-\rho_3,t+\rho_3)$. Then
	\begin{align}
		\label{Eq:LDPLower}
		\liminf_{n\to\infty} \inv{n} \log \nu_n\big(G\big) & \geq -J(m,\mu,t)
	\end{align}
where $B(\mu,r)$ denotes the ball in $\m_1$ of radius $r$, centred on $\mu$ with respect to $\beta$, the metric of  (\ref{Eq:DefBeta}) inducing weak topology.
\end{prop}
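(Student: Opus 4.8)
The plan is to transfer the lower bound through the same excursion-counting identity and the sample-path LDP used for the upper bound, but now exploiting the \emph{lower} bound in Corollary~\ref{Thm:SamplePathJSCPair}. As in the proof of Lemma~\ref{Thm:LDUpperTemp}, on the event $E_n$ we have $\lambda_n=\inv{\tau_n}\Lb_{2n}^Y(\tau_n)$ and $\sigma_n=\inv{\tau_n}\Sb_{2n}^Y(\tau_n)$, and $\inv{n}\log\P(E_n)\to 0$ by Lemma~\ref{Thm:LogCmTrivial}. So it suffices to lower-bound $\inv{n}\log\P\{(\sigma_n,\lambda_n,\tau_n)\in G;\ E_n\}$. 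The idea is to find an open neighbourhood $U$ in $\c([0,1];\y^2)$ of a carefully chosen reference path $(\bm x,\bm p,\bm y,\bm q)$ such that every element of $U$ lies in $\wt E_n$-type event \emph{and} has $(\inv{\tau}\bm y(\tau),\inv\tau\bm q(\tau),\tau)\in G$; then the lower bound in Corollary~\ref{Thm:SamplePathJSCPair} applied to $U$ gives $\liminf\inv{2n}\log\P(\cdots)\ge -I$ on that neighbourhood, hence $\liminf\inv n\log\nu_n(G)\ge -2\inf_U I$.

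The reference path will be the obvious minimiser suggested by \eqref{Eq:RateSimplification}: pick a probability measure $\mu'$ close to $\mu$ with finite support and $m_1(\mu')=m'$ close to $m$ such that $H(\mu')$ is close to the supremal value $\inv{2t}\Theta(1/(2t))/\ \ldots$ appearing in $J$ — concretely, approximate by the exponentially tilted geometric on $\{1,\dots,N\}$ with mean exactly $\inv{2t'}$ for suitable $t'$ near $t$, so that $t'=\inv{2m_1(\mu')}$ holds exactly. Then take $\bm q$ to be the straight-line path $s\mapsto (s/t')\,\mu'$ on $[0,t']$ continued linearly by $\g_2$ on $[t',1]$ (as in \eqref{Eq:SLPath}), $\bm y(s)=m_1(\bm q(s))$, and $\bm x=\bm y$, $\bm p=\bm q$ so that $\bm x(s)\ge\bm y(s)$ holds with equality and $\tau(\bm x+\bm y)=t'$. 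For this path $I(\bm x,\bm p,\bm y,\bm q)=2t'H(\mu'\vert\g_2)=2t'(m'\log 2-H(\mu'))$, which by construction is within $\varepsilon$ of $\log 4-\inv m H(\mu)-\ldots = J(m,\mu,t)$ (using \eqref{Eq:EntropyMax} and continuity in $(m,\mu,t)$ on the constraint set, together with $J(m,\mu,t)=+\infty$ being the only remaining case, handled trivially).

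The technical heart is the neighbourhood argument: I must show that there is $\delta>0$ such that whenever $(\bm x',\bm p',\bm y',\bm q')$ is within $\delta$ of the reference path in the sup metric, the path still has $\tau'=\tau(\bm x'+\bm y')$ within $\rho_3$ of $t$, has $(\inv{\tau'}\bm y'(\tau'),\inv{\tau'}\bm q'(\tau'),\tau')\in G$, and — the delicate point — is compatible with the event $\wt E_n$ up to the discretisation at scale $1/(2n)$, i.e.\ that the probability of the discrete event is genuinely comparable to the probability of a $\delta$-tube around the reference path. Because $\bm x=\bm y$ makes the constraint $\bm x(s)\ge\bm y(s)$ non-strict, I expect I should actually perturb the reference path slightly, replacing $\bm x$ by $\bm x+\epsilon(\text{small increasing bump})$ so that $\bm x'(s)>\bm y'(s)$ strictly on $(0,\tau']$, keep $\bm x'(\tau')=\bm y'(\tau')=\inv2$, and let $\epsilon\to 0$ at the end using continuity of $I$ — mirroring exactly the "straight-line-path" manipulation already done for the upper bound. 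This strict-positivity engineering, and checking that the discrete excursion event $E_n$ (not merely $\wt E_n$) is implied on the tube, is the main obstacle; everything else is continuity of $\tau(\cdot)$, continuity of $m_1$ along absolutely continuous paths, and the elementary entropy optimisation \eqref{Eq:EntropyMax}. Once the tube is in place, the large deviations lower bound of Corollary~\ref{Thm:SamplePathJSCPair}, the factor-of-two bookkeeping, and Lemma~\ref{Thm:LogCmTrivial} combine to give \eqref{Eq:LDPLower}, after letting $\varepsilon,\epsilon\to0$.
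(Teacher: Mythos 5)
There is a genuine gap at exactly the point you flag as ``the main obstacle'': your central plan --- find an open tube $U$ around a reference path such that every path in $U$ is compatible with the excursion event --- cannot be carried out for $E_n$ itself. The event $E_n$ is defined by exact equalities, $T_{\wh\tau_n}=2n$ and $\sum_{j\leq\wh\tau_n}X_j=\sum_{j\leq\wh\tau_n}Y_j$, i.e.\ in path terms $\Sb_{2n}^X(\tau_n)=\Sb_{2n}^Y(\tau_n)=\inv{2}$ exactly; no open subset of $\c([0,1];\y^2)$ implies such equalities, so $\P\{(\sigma_n,\lambda_n,\tau_n)\in G;\ E_n\}$ can never be bounded \emph{below} by the probability that the path quadruple lies in an open set. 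Your proposed fix (a small increasing bump to make $\bm{x}>\bm{y}$ strictly, then let the perturbation vanish) only addresses the ordering constraint $\bm{x}(s)\geq\bm{y}(s)$ --- which the paper handles anyway with the $w$-slack set $\i^{\Sb}_w$ and $w\to 0$ --- but does nothing about the equality constraints, which is where the argument actually breaks.

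The paper's resolution, and the key idea missing from your proposal, is a relaxation of the \emph{discrete} event before passing to path space: decompose $E_n=\bigcup_r E_{n,r}$ according to $\wh\tau_n=r$, drop the last descent (costing $\beta$-distance $\inv{r}<\frac{\rho_1}{2}$ on the empirical measure), and use the memoryless property of the geometric law, $\P(Z=a)=\P(Z>a)$ for $Z\sim\g_2$, applied to the final ascent and the final descent, to replace the equalities by the inequalities $\sum_{j\leq r}X_j\geq n$, $\sum_{j\leq r}Y_j\geq n$ at the price of a harmless factor $\inv{4}$. The relaxed event $E_{n,r}'$ is of open type in $(\Sb_{2n}^X,\Lb_{2n}^X,\Sb_{2n}^Y,\Lb_{2n}^Y)$, so the lower bound of Corollary~\ref{Thm:SamplePathJSCPair} applies to an open set containing the straight-line reference path; from there your outline (reference path as in (\ref{Eq:SLPath}), $w\to 0$, the entropy maximisation (\ref{Eq:EntropyMax}) and the factor-of-two bookkeeping) coincides with the paper. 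Without this relaxation step the lower bound cannot be obtained along the route you describe.
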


\begin{proof}
	We can assume that $m_1(\mu)=m=\inv{2t}$ since otherwise $J(m,\mu,t)=\infty$ and (\ref{Eq:LDPLower}) is trivial. From the definition of $\nu_n$ we have, as before,
	\begin{align*}
		\log\big(\nu_n(G)\big) &=  \log\P\left\{(\sigma_n,\lambda_n,\tau_n)\in G \right\} - \log\P(E_n).
\intertext{Recall that $\lim_{n\to\infty} \log\P(E_n)=0$. Moreover $E=\bigcup_r E_{n,r}$ where}
E_{n,r} &= \left\{ \sum_{j=1}^r X_j= \sum_{j=1}^r Y_j,\, \sum_{j=1}^k X_j\geq  \sum_{j=1}^k Y_j\ \forall\, k< r\,\ \tau_n=\frac{r}{2n}  \right\}.
\end{align*}
On $E_{n,r}$ we have $\frac{r}{2n}=\tau_n\in (t-\rho_3,m+\rho_3)$ and the condition $\sigma_n \in (m-\rho_2,m+\rho_2)$ is equivalent to $r\in \left(\frac{n}{m+\rho_2},\frac{n}{m-\rho_2}\right)$. Therefore
\begin{align*}
\P\left\{(\lambda_n,\sigma_n,\tau_n)\in G;E_n \right\} & = \sum_{r\in\i_n} \P\left\{\inv{r}\sum_{j=1}^r \delta_{Y_j}\in B(\mu,\rho_1);\,E_{n,r} \right\}
\intertext{where $\i_n=\N\cap\left(2n(t-\rho_3), 2n(t+\rho_3)\right)\cap \left( \frac{n}{m+\rho_2}, \frac{n}{m-\rho_2}\right)$. Fix now $w>0$ and let $N_1$ be large enough to have $nw>\frac{2}{\rho_1}$. Then if $r\in\uind{\i_n}{w}=\i_n\cap(wn,\infty)$ and $n\geq N_1$,}
	\beta\left(\inv{r}\sum_{j=1}^r \delta_{Y_j},\inv{r}\sum_{j=1}^{r-1} \delta_{Y_j}\right) & = \inv{r}<\frac{\rho_1}{2}.
	\intertext{Using independence of the $X_j,Y_j$ and the fact that $\P(Z=a)=\P(Z>a)$ for any $Z$ with law $\g_2$,}
	\P\left\{(\lambda_n,\sigma_n,\tau_n)\in G;\, E_n\right\} & \geq \inv{4} \sum_{r\in\uind{\i_n}{w}} \P\left\{\inv{r}\sum_{j=1}^{r-1} \delta_{Y_j}\in B\left(\mu,\frac{\rho_1}{2}\right),\, E_{n,r}',\,\tau_n=\frac{r}{2n}  \right\}\\
 & \geq \inv{4} \sum_{r\in\uind{\i_n}{w}} \P\left\{\inv{r}\sum_{j=1}^{r} \delta_{Y_j}\in B\left(\mu,\frac{\rho_1}{4}\right),\, E_{n,r}',\,\tau_n=\frac{r}{2n}  \right\}
\end{align*}
for $n\geq 2N_1$. Here, 
\begin{align*}
	E_{n,r}'	&=\left\{\inv{r}\sum_{j=1}^r X_j\geq n,\, \inv{r}\sum_{j=1}^r Y_j\geq n,\ \sum_{j=1}^a X_j\geq \inv{r}\sum_{j=1}^a Y_j
\ \forall a<r,\tau_n=\frac{r}{2n}\right\}. 
\intertext{Recall that $\lambda_n=\inv{\tau_n}\Lb_{2n}^Y(\tau_n)$, that $\tau_n-\inv{n}\leq \tau^{S}:=\tau(\Sb_{2n}^X+\Sb_{2n}^Y)\leq \tau_n$ and that the $\Sb$-processes are increasing in time. It follows that}
	E_{n,r}' & = \left\{\Sb_{2n}^X(s)\geq \Sb_{2n}^Y(s)\,\forall\, s<\tau_n,\ \Sb_{2n}^X(\tau_n)\geq \inv{2},\, \Sb_{2n}^Y(\tau_n)\geq \inv{2},\tau_n=\frac{r}{2n} \right\}\\
	& \supseteq \left\{\Sb_{2n}^X(s)\geq \Sb_{2n}^Y(s)\,\forall\, s<\tau^{\Sb},\ \Sb_{2n}^X(\tau^{\Sb})\geq \inv{2},\, \Sb_{2n}^Y(\tau^{\Sb})\geq \inv{2} ,\tau_n=\frac{r}{2n}\right\}.
\end{align*}
Denote by $\wt{E}_{n,r}$ the latter event and define $\wt{E}_n=\bigcup_{r\in\uind{\i}{w}_{n}} E_{n,r}$. We obtain
\begin{align*}
	\P\left\{\left(\sigma_n,\lambda_n,\tau_n\right)\in G;\ E_n\right\} &\geq \inv{4}\P\left\{\inv{\tau_n}\Lb_{2n}^Y\in B\left(\mu,\frac{\rho_1}{4}\right);\ \wt{E}_{n}\right\}.
\intertext{Let now $N_2$ be large enough such that $n\geq N_2$ implies $n>\frac{2}{w}\vee\frac{2}{\rho_3}\vee \frac{4w^2}{\rho_1}\vee \frac{8}{w\rho_1}$ and $\inv{m_1+2\rho_2}-\inv{n}<\inv{m_1+\rho_2}$. Then}
	\beta \left( \inv{\tau_n}\Lb_{2n}^Y(\tau_n)  - \inv{\tau^{\Sb}} \Lb_{2n}^Y (\tau^{\Sb}) \right) & \leq \beta \left( \inv{\tau_n}\Lb_{2n}^Y(\tau_n)  - \inv{\tau^{\Sb}} \Lb_{2n}^Y (\tau_n) \right) \\
	&\quad+ \beta \left( \inv{\tau^{\Sb}}\Lb_{2n}^Y(\tau_n)  - \inv{\tau^{\Sb}} \Lb_{2n}^Y (\tau^{\Sb}) \right)\\
	& \leq \abs{\inv{\tau_n}-\inv{\tau^{\Sb}}} \beta \left(\Lb_{2n}^Y(\tau_n),0\right) + \frac{\abs{\tau_n-\tau^{\Sb}}}{\tau^{\Sb}}\\
	& < \inv{nw}<\frac{\rho_1}{8}
\end{align*}
and further, using the fact that $\abs{\tau_n-\tau^{\Sb}}<\inv{n}$ repeatedly,
\begin{align*}
	\left\{\tau^{\Sb}\in\i^\tau \right\} & \subseteq \{\tau_n>\frac{w}{2},\,\tau\in (t-\rho_3,t+\rho_3)\cap \left(\inv{m+\rho_2}, \inv{m-\rho_2}\right)
\end{align*}
where $\i^\tau=(w,\infty)\cap \left(t-\frac{\rho_3}{2},t+\frac{\rho_3}{2}\right) \cap \left(\inv{2m+4\rho_1}, \inv{2m-2\rho_1}\right)$. It follows that
\begin{align}
\label{Eq:PfLDPLower}
\P\left\{\inv{\tau_n}\Lb_{2n}^Y\in B\left(\mu,\frac{\rho_1}{4}\right); \wt{E}_{n}\right\} & \geq \P\left\{ \inv{\tau^{\Sb}} \Lb_{2n}^Y \left(\tau^{\Sb} \right)\in B\left(\mu,\frac{\rho_1}{8}\right),\tau^{\Sb}\in \i^\tau, \left(\Sb^X_{2n},\Sb^Y_{2n}\right) \in \i^{\Sb}_w \right\}.
\end{align}
Here,
\begin{align*}
\i^{\Sb}_w&=\{(x,y)\colon x(s)> y(s)-w\,\forall s<\tau(x+y),\, x(\tau(x+y)),\, y(\tau(x+y))>\inv{2}-w\}.
	\end{align*}
The right-hand side of (\ref{Eq:PfLDPLower}) is of the form $(\Sb_{2n}^X,\Sb_{2n}^Y,\Sb_{2n}^X, \Sb_{2n}^X)\in U$ for an open subset $U$ of $\c\left([0,1];\y^2\right)$. So we can apply Corollary~\ref{Thm:SamplePathJSCPair}, then let $w\to 0$ and obtain
\begin{align}	
	\label{Eq:LBProcTerms}
	\liminf_{n\to\infty}\inv{n}\log\nu_n(G) & \geq -2\inf\left\{I(x,p,y,q)\colon  \beta\left(\inv{\tau}q(\tau),\mu\right)<\frac{\rho_1}{8},\tau\in\i^{\tau}, (x,y)\in \i^{\Sb}\right\}
\end{align}
where $\tau:=\tau(x+y)$ and $\i^{\Sb}=\{(x,y)\colon x(s)\geq y(s)\,\forall s<\tau,\, x(\tau)= y(\tau)=\inv{2}\}$. Let $(\bm{p},\bm{x})\in\e$ be such that $\bm{x}(s) = m_1(\bm{p}(s))\ \forall\, s$, for any $s\in [0,t)$ the inequality $x(s)\leq s\mu$ holds and $x(t)=\inv{2}$. Define further $\wt p\colon [0,1]\map \m_+(\N)$ by
\begin{align*}
	\wt{p}(s) & = \begin{cases}
			s\mu &\text{if } s\in [0,t]\\
			t\mu + (s-t)g_2\quad & \text{if } s\in [t,1]
	\end{cases}
\intertext{($g_2\sim\g_2$) and $\wt{y}(t)=m_1(\wt{p})$. Then $(x,p),\,(\wt{y},\wt{q})\in\e$ and $\tau(x+\wt{y})=t\in\i^\tau$. By construction $(x,\wt{y})\in\i^{\Sb}$. Moreover, $\int_0^1 H\left(\wt{q}(s)\vert\g_2\right)\,\d s=t H(\mu\vert \g_2)$ and  $\int_0^1 H(p(s)\vert\g_2)\,\d s\geq t H(\inv{t}\bm{p}(t)\vert\mu)$. So by (\ref{Eq:LBProcTerms}), and the same argument as for (\ref{Eq:RateSimplification}),}
\liminf_{n\to\infty}\inv{n}\log\nu_n(G) & \geq -2\inf \left\{ I\left(\bm{x}, \bm{p}, \wt{y},\wt{q}\right) \colon x(s)\leq s\mu,\, x(t)=\inv{2}\right\}\\
 & = -2\inf\left\{ t\left(H\left(\mu\vert\g_2\right)+H\left(p\vert\g_2\right) \right)\colon m_1(q)=\inv{2} \right\}\\
	& \geq - J(m,\mu,t).
\end{align*}
This concludes the proof of the lower bound, and hence of Theorem~\ref{Thm:LDP} for $(\nu_n)_{n\in\N}$.
\end{proof}

\section{A Formula for the Maximum of the Support}
\label{Sec:SpectralEdgeFormula}

In this section we apply our large deviations result to a problem from free probability theory. Fix a compactly supported probability measure $\mu$. Its \emph{Cauchy transform} is the analytic function $G_\mu$ where for $z\in\C\setminus\spt\mu$,
\begin{align*}
	G_\mu(z) & = \int \frac{\mu(\d t)}{z-t}.
	\intertext{The function $G_\mu$ is analytic on $\C\setminus\spt(\mu)$ and locally invertible on a neigbourhood of $\infty$. Its inverse, $K_\mu$ is meromorphic around zero, where it has a simple pole of residue 1. Removing this pole we obtain an analytic function}
	R_\mu(z) &= K_\mu(z) - \inv{z} = \sum_{n=0}^\infty k_{n+1}(\mu) z^n.
\end{align*}
The function $R_\mu$ is called the \emph{R-transform} of $\mu$ while its coefficients $\left(k_n\left(\mu\right)\right)_{n\in\N}$ are called the \emph{free cumulants} of $\mu$. Given that $\mu$ has compact support it is determined by its R-transform. So, given an R-transform we can, at least in theory, obtain the corresponding probability measure. However in order to do so one needs to find the functional inverse of $R(z)+\inv{z}$ for which a closed-form expression may not exist. Using the large deviations principle of Section~\ref{Sec:LDPNCP} we can deduce the right edge of the support of $\mu$,  provided that the free cumulants are non-negative.

The problem of determining a measure from its R-transform occurs in free probability: if $a_1,a_2$ are free non-commutative random variables of law $\mu_1,\mu_2$ respectively then the law $\mu$ of $a_1+a_2$ has the property that $k_n(\mu)=k_n(\mu_1)+k_n(\mu_2)$ and the law $\nu$ of $\lambda a_1$ has $k_n(\nu)=\lambda k_n(\mu_1)$ for any $\lambda\in\R$. This linearity property allows the computation of laws of free random variables, similarly to the moment generating function in commutative probability theory. For background on free probability theory see for example \cite{VDN, HiaiPetz} and the survey of probabilistic aspects of free probability theory \cite{BianeFPP}. 

Because the R-transform determines the underlying probability measure one might still hope to recover some information about the measure, for example about the support, even when the Cauchy transform cannot be obtained explicitly. The special case where the underlying law is a free convolution of a semicircular law with another distribution has been studied extensively by P.~\ts{Biane} \cite{Biane97}.

In this section we describe how the maximum of the support of $\mu$ can be deduced from the free cumulants.

Combinatorial considerations of the way R- and Cauchy transform are related \cite{NicaSpeicher, Speicher94} give rise to the \emph{free moment-cumulant formula}:
\begin{align}
	\label{Eq:MomentCumulant}
	\int t^n \mu (dt) & = \sum_{\pi\in\NC(n)} \prod_{j=1}^\infty k_j(\mu)^{B_j(\pi)}
\intertext{where $B_j(\pi)$ is the number of blocks of size $j$ in $\pi$.Our starting point is the observation that the edge of the support of a measure can be deduced from the logarithmic asymptotics of its moments: namely if $\rho_\mu$ is the maximum of the support of $\mu$ then}
	\label{Eq:EdgeSptMoments}
	\log \rho_\mu & = \limsup_{n\to\infty} \inv{n}\log\int t^n\mu(\d t).
\end{align}
Suppose for the moment that all cumulants are positive (which is indeed the first case we will consider, in Section~\ref{Sec:CumPve}). Then we can re-write (\ref{Eq:MomentCumulant}) as the expectation of an exponential functional of a uniformly random non-crossing partition. Namely, if $\theta\colon\N\map\R$ is given by $\theta_j:= \log k_j$, and $\wh{\E}_n=\E(\cdot\vert E_n)$ (recall that $E_n$ denotes the event we conditioned on in Section \ref{Sec:UnifNCP}) and $C_n$, the $n^\text{th}$ Catalan number, is the cardinality of $\NC(n)$) then
\begin{align*}
	\int t^n \mu(\d t) & = \sum_{\pi\in\NC(n)} \exp\left(\sum_{j=1}^\infty \log(k_j) B_j(\pi)\right) = C_n\,\wh\E_n\left[e^{2n\tau_n\,\langle\theta,\lambda_n\rangle}\right].
\end{align*}
In Section~\ref{Sec:CumPve} we evaluate the logarithmic asymptotics of this expectation by Varadhan's Lemma, using the large deviations principle we have proved above.
\par Using the fact that $\lim_{\epsilon\to 0} \epsilon\log\epsilon=0$ one might suppose that a similar result will still hold when some of the cumulants are allowed to be zero. This is indeed the case and we will prove this in Section~\ref{Sec:CumNNve}.

\begin{rmk}
	\label{Rmk:ShiftFirstCumulant}
	For $\gamma\in\R$ the shift operation given by $S_\gamma(\mu)(E)=\mu(\{x-\gamma\colon x\in E\}$ shifts the maximum of the support by $\gamma$ to the right. Also $S_\gamma(\mu)=\mu\boxplus\delta_\gamma$ which leaves all cumulants invariant, except for the first which is incremented by $\gamma$. So we can always take the first cumulant to be anything we like. 
\end{rmk}

\subsection{All Free Cumulants Positive}
\label{Sec:CumPve}

\par\noindent We first consider the case where all free cumulants are positive. Examples include the free Poisson distribution.

\begin{thm}
	\label{Thm:EdgePositive}
	Let $\mu$ be a compactly supported probability measure on $[0,\infty)$ such that its free cumulants $(k_j)_{j\in\N}$ all positive. Then the right edge $\rho_\mu$ of the support of $\mu$ is given by
	\begin{align}
		\label{Eq:EdgePositive}
		\log\rho_\mu & = \sup\left\{\inv{m_1(p)}\sum_{m=1}^\infty p_m\log\left(\frac{k_m}{p_m}\right) + \frac{\Theta\big(m_1(p) \big)}{m_1(p)} \colon p\in\m_1^1(\N)\right\}
	\end{align}	
	where $\m_1^1(\N)=\{p\in\m_1(\N)\colon m_1(p)<\infty\}$ is the set of probability measures on \N\ with finite mean and $\Theta$ was defined in (\ref{Eq:EntropyMax}).
\end{thm}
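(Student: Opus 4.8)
The strategy is to apply Varadhan's lemma to the representation $\int t^n\,\mu(\d t) = C_n\,\wh\E_n\big[e^{2n\tau_n\langle\theta,\lambda_n\rangle}\big]$, where $\theta_j = \log k_j$, combined with $\log\rho_\mu = \limsup_n \tfrac1n\log\int t^n\,\mu(\d t)$ and the asymptotics $\tfrac1n\log C_n\to\log 4$. Taking logarithms,
\begin{align*}
	\inv n\log\int t^n\,\mu(\d t) & = \inv n\log C_n + \inv n\log\wh\E_n\Big[e^{n\cdot 2\tau_n\langle\theta,\lambda_n\rangle}\Big],
\end{align*}
so the whole problem reduces to computing $\lim_n \tfrac1n\log\wh\E_n[e^{n F(\sigma_n,\lambda_n,\tau_n)}]$ where $F(m,\mu,t) = 2t\langle\theta,\mu\rangle$ is the continuous functional appearing in the exponent (continuity is on the set where $m_1(\mu)=m$; note $\langle\theta,\lambda_n\rangle$ is genuinely a function of $\lambda_n$ but $\theta$ is unbounded, which is the source of the main difficulty below). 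Theorem~\ref{Thm:LDP} gives the LDP for $(\sigma_n,\lambda_n,\tau_n)$ under $\wh\P_n = \P(\cdot\mid E_n)$ with rate function $J$ — here one uses Lemma~\ref{Thm:LogCmTrivial} to see that conditioning on $E_n$ does not change the logarithmic asymptotics, so the conditioned and unconditioned LDPs coincide.

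First I would apply Varadhan's lemma (in the form for functionals that are not bounded but satisfy a moment/tail condition, e.g. Theorem~4.3.1 and Lemma~4.3.4--4.3.6 in Dembo--Zeitouni) to obtain
\begin{align*}
	\lim_{n\to\infty}\inv n\log\wh\E_n\Big[e^{n\cdot 2\tau_n\langle\theta,\lambda_n\rangle}\Big] & = \sup_{(m,\mu,t)}\big\{2t\langle\theta,\mu\rangle - J(m,\mu,t)\big\}.
\end{align*}
On the effective domain of $J$ we have $m = m_1(\mu) = \tfrac1{2t}$, so $t = \tfrac1{2m_1(\mu)}$ and $2t = \tfrac1{m_1(\mu)}$; substituting the explicit form of $J$ from Theorem~\ref{Thm:LDP}, the constants $\log 4$ cancel against $\tfrac1n\log C_n\to\log 4$, and
\begin{align*}
	2t\langle\theta,\mu\rangle - J(m_1(\mu),\mu,t) & = \inv{m_1(\mu)}\sum_m \mu_m\log k_m + \inv{m_1(\mu)}H(\mu) + \inv{m_1(\mu)}\log(m_1(\mu)-1) - \log\Big(1-\inv{m_1(\mu)}\Big) - \log 4.
\end{align*}
Recognising $H(\mu) + \sum_m\mu_m\log k_m = \sum_m\mu_m\log(k_m/\mu_m)$ and $\tfrac1m\log(m-1) - \log(1-\tfrac1m) = \tfrac1m[\log(m-1) - m\log(1-\tfrac1m)] = \tfrac1m\Theta(m)$ by \eqref{Eq:EntropyMax}, adding back $\log 4$ from the Catalan factor gives exactly the right-hand side of \eqref{Eq:EdgePositive}, with the supremum over $p\in\m_1^1(\N)$ (the finite-mean constraint being exactly the effective domain of $J$, since $J(m,\mu,t)=\infty$ when $m_1(\mu)\neq m$ or $t\neq\tfrac1{2m}$).

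The main obstacle is the unboundedness of $\theta$: $k_m$ may grow (or decay) so that $\langle\theta,\lambda_n\rangle$ is not a bounded continuous functional, so the plain (bounded-functional) version of Varadhan's lemma does not apply directly, and one must verify the exponential-moment condition $\limsup_n\tfrac1n\log\wh\E_n[e^{n\gamma\cdot 2\tau_n\langle\theta,\lambda_n\rangle}] < \infty$ for some $\gamma>1$ — equivalently a tail bound ruling out contributions from configurations where $\lambda_n$ puts mass on very large blocks. Since $\mu$ is compactly supported, $\rho_\mu<\infty$, and the moment bound $\int t^n\,\mu(\d t)\leq\rho_\mu^n$ gives precisely such a bound after replacing $k_j$ by $k_j^\gamma$ (equivalently $\mu$ by the measure with cumulants $k_j^\gamma$, which one must argue is still the moment sequence of a genuine compactly supported measure, or handle directly via Hölder at the level of the combinatorial sum over $\NC(n)$); this is where the positivity of the cumulants is essential, since it makes $\sum_{\pi\in\NC(n)}\prod_j k_j^{B_j(\pi)}$ a sum of positive terms that one can bound and manipulate termwise. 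A secondary technical point is verifying that $F(m,\mu,t)=2t\langle\theta,\mu\rangle$ is upper semicontinuous along the relevant level sets so that the upper bound half of Varadhan applies; for the lower bound one restricts to $\mu$ of finite support where everything is manifestly continuous and then takes a supremum, using lower semicontinuity of $J$. Once these integrability issues are dispatched, the remainder is the bookkeeping of the algebraic simplification sketched above.
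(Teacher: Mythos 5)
Your proposal follows essentially the same route as the paper's proof: the moment--cumulant formula together with $\inv{n}\log C_n\to\log 4$, the LDP of Theorem~\ref{Thm:LDP} (contracted to $(\lambda_n,\tau_n)$), Varadhan's lemma, and the same algebraic simplification on the effective domain $m=m_1(\mu)=\inv{2t}$ using (\ref{Eq:EntropyMax}); your bookkeeping is correct. The one genuine difference is how the unboundedness of $\theta_j=\log k_j$ is dispatched. The paper first proves the statement when the cumulant sequence is bounded, so that the moment condition in Varadhan's lemma is immediate, and then reduces the general case to this one by the dilation $D_{R^{-1}}$, using that compact support forces $k_n\leq \Gamma R^n$ (analyticity of the R-transform) and that dilation rescales $\rho_\mu$ and shifts the variational expression by exactly $\log R$. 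You instead verify the moment condition directly; of your two suggested devices, the one that works is the termwise bound: by positivity of all terms, for $\gamma\geq 1$ one has $\sum_{\pi\in\NC(n)}\prod_j k_j^{\gamma B_j(\pi)}\leq \bigl(\sum_{\pi\in\NC(n)}\prod_j k_j^{B_j(\pi)}\bigr)^{\gamma}=\bigl(\int t^n\,\mu(\d t)\bigr)^{\gamma}\leq \rho_\mu^{\gamma n}$, and dividing by $C_n$ gives $\limsup_n\inv{n}\log\wh\E_n\bigl[e^{2n\gamma\tau_n\langle\theta,\lambda_n\rangle}\bigr]\leq\gamma\log\rho_\mu-\log 4<\infty$; this buys you a slightly more direct argument than the paper's two-step reduction. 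The other branch you hedge on --- that $(k_j^{\gamma})_j$ is again the cumulant sequence of a genuine compactly supported measure --- is not justified in general and should be dropped. Your continuity caveat is real but is equally present in the paper (whose assertion that $g$ is bounded and continuous when $k_j\leq K$ ignores that $\log k_j$ may be unbounded below); treating the upper bound via upper semicontinuity and the lower bound via measures of finite support, as you indicate, patches both versions of the argument.
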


\par\noindent This variational problem can often be solved by Lagrange multipliers or similar methods. Some examples are given below.

\begin{rmk}
	\label{Rmk:VaradhanSRF}
	Equation (\ref{Eq:EdgePositive}) looks somewhat similar to \ts{Varadhan}\emph{'s spectral radius formula} \cite[Exercise 3.1.19]{DemboZeitouni}, giving the spectral radius of a (deterministic) $N\times N$ matrix in terms of its entries. Namely, let $B=\left(b_{ij}\right)_{i,j=1}^N$ be irreducible and have strictly positive entries then the spectral radius (absolutely largest eigenvalue) $\rho_B$ of $B$ is given by
\begin{align}
	\label{Eq:VaradhanSRF}
	\log \rho_B &= \sup\left\{\sum_{i,j=1}^N q(i,j) \log\left(\frac{b(i,j)}{q_f(j\vert i)} \right) \colon q\in \m_1 \left( \ul{N}\times \ul{N}\right), \sum_{j=1}^N q(\cdot,j) = \sum_{j=1}^n q(j,\cdot) \right\}
\end{align}
where $q_f(j\vert i) = \frac{q(i,j)}{\sum_r q(i,r)}$. Despite the apparent formal similarities we do not seem to be able to relate this formula to ours. This is because the free cumulants of a deterministic matrix are given by a very complicated function of its entries.
\end{rmk}

\begin{proof}[Proof of Theorem~\ref{Thm:EdgePositive}] 
By Stirling's formula, $\inv{n}\log C_n \longrightarrow \log 4$ as $n\longrightarrow\infty$, so that
\begin{align}
	\label{Eq:PlusCatalan}	
	\limsup_{n\to\infty} \int t^{n}\,\mu(\d t) & = \log 4 + \limsup_{n\to\infty} \inv{n}\log \wh\E_n\left[e^{n g(\lambda_n,\tau_n)} \right]
\end{align}
where $g\colon \m_1(\N)\times [0,1]\map\R$ is defined by $g(\mu,t)=2t\langle\theta,\mu\rangle$.

It is a direct application of the contraction principle, Theorem 4.2.1 in \cite{DemboZeitouni}, that $(\lambda_n,\tau_n)_{n\in\N}$ satisfies a large deviations principle on $\m_1(\N)\times [0,1]$ with rate function $\wt{J}_{13}$ given by $\wt{J}_{13}(\mu,t)=J(\mu,m_1(\mu),t)$.

Suppose first that the sequence $(k_n)_{n\in\N}$ is bounded by $K\in (0,\infty)$. Then $g$ is continuous and bounded, with norm $\norm{g}_\infty\leq 2\log K$. So for \emph{any} $\gamma>1$,
\begin{align*}
	\limsup_{n\to\infty} \inv{n}\log \wh{\E}_n\left[e^{n\gamma g(\tau_n, \lambda_n)} \right] & \leq 2\gamma\log(K)<\infty.
\end{align*}
Hence the moment condition for Varadhan's Lemma (\ts{Dembo--Zeitouni} \cite{DemboZeitouni}, Theorem 4.3.1) applies and so
\begin{align*}
	\lim_{n\to\infty} \inv{n} \log \wh{\E}_n \left[ e^{n g(\lambda_n,\tau_n)}\right] & = \sup\left\{g(\mu,t) - \wt{J}_{13}(\mu,t)\colon (\mu,t)\in \m_1(\N) \times [0,1] \right\}.
\end{align*}
Let $\wh{\rho}_\mu$ denote the left-hand side above and note that $\rho_\mu=\wh{\rho}_\mu +\log 4$. So
\begin{align*}
\log(\rho_\mu) & = \sup\left\{\inv{m} \sum_{n=1}^\infty p_n \log k_n +\inv{m} H(p) - 2\log\left(1-\inv{m}\right) \colon m_1(p)=m \right\}
\end{align*}
which is (\ref{Eq:EdgePositive}).

We now turn to the general case, that is, we remove the assumption that the sequence of free cumulants is bounded. Because $\mu$ is compactly supported, its R-transform is analytic on a neighbourhood of zero, by Theorem 3.2.1 in \ts{Hiai--Petz} \cite{HiaiPetz}. So there exist $\Gamma,R\in (0,\infty)$ with $k_n\leq \Gamma R^n$ for all $n\in\N$. Define the dilation operator of scale $\inv{R}$ by $D_{R^{-1}} (\mu)(A) =\mu\big(R^{-1}A\big)$, \big(where $tA=\{tx\colon x\in A\}$\big) and let $\wh{k}_n=R^{-n}k_n$ be the $n^\text{th}$ cumulant of $D_{R^{-1}}(\mu)$. The sequence $\left(\wh{k}_n\right)_{n\in\N}$ is bounded, so the above applies to $\rho_{D_{R^{-1}}(\mu)}=\frac{\rho_\mu}{R}$. In particular,
\begin{align*}
\log\rho_{D_{R^-1}(\mu)} & = \sup\left\{\inv{m} \sum_{n=1}^\infty p_n \log \wh{k}_n +\inv{m} H(p) +\inv{m}\, \Theta(m)\colon m_1(p)=m \right\}\\
& =\sup\left\{\inv{m} \sum_{n=1}^\infty p_n \log k_n   +\inv{m} H(p) +\inv{m}\, \Theta(m)\colon m_1(p)=m \right\} - \log(R)
\end{align*}
which completes the proof of Theorem~\ref{Thm:EdgePositive}.
\end{proof}

\subsection{Non-Negative Free Cumulants}
\label{Sec:CumNNve}

We now consider non-commutative random variables of which all free cumulants are non-negative but some of them are allowed to take the value zero. We will denote by $L$ the set of $n\in\N$ such $k_n\ne 0$. As a prominent example we mention the centred semicircle distributions, where $L=\{2\}$. 

It turns out that the variational formula (\ref{Eq:EdgePositive}) still holds, provided we follow the convention that $0\log 0=0$.

\begin{thm}
	\label{Thm:EdgeNonnegative}
	Let $\mu$ be a compactly supported probability measure whose free cumulants $(k_n)_{n\in\N}$ are all non-negative. Then the maximum of the support $\rho_\mu$ of $\mu$ is given by
	\begin{align}
		\label{Eq:EdgeNonnegative}
		\log\left(\rho_\mu \right) & = \sup\left\{\inv{m_1(p)}\sum_{n\in L} p_n\log\left(\frac{k_n}{p_n}\right) - \frac{\Theta(m_1(p))}{m_1(p)} \colon p\in\m_1^1(L)\right\}
	\end{align}	
	where we $\m_1^1(L)$ denotes the set of $p\in\m_1^1(\N)$ such that $p(L^c)=0$.
\end{thm}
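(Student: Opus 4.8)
The plan is to reuse the Varadhan-type argument behind Theorem~\ref{Thm:EdgePositive}, now with an \emph{upper semicontinuous} functional in place of a bounded continuous one for the upper bound, and to recover the matching lower bound by perturbing $\mu$ to a measure with strictly positive free cumulants. First I would record two standing reductions. Since all free cumulants are non-negative, every moment $\int t^n\,\mu(\d t)=\sum_{\pi\in\NC(n)}\prod_j k_j^{B_j(\pi)}$ is non-negative (and if they all vanish then $\mu=\delta_0$, $L=\emptyset$, and both sides of \bref{Eq:EdgeNonnegative} equal $-\infty$, so assume not); non-negativity of the moments forces $\rho_\mu\ge\lvert\inf\spt\mu\rvert$, for otherwise some odd moment would be negative, so \bref{Eq:EdgeSptMoments} is available as stated. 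Next, by compact support $k_n\le\Gamma R^n$, and applying the dilation $D_{R^{-1}}$ replaces $k_n$ by $R^{-n}k_n\le\Gamma$, leaves $L$ unchanged, and shifts both sides of \bref{Eq:EdgeNonnegative} by the same constant $-\log R$ (using $\sum_{n\in L}np_n=m_1(p)$ for $p\in\m_1^1(L)$); so I assume $0\le k_n\le\Gamma$.

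For the upper bound I would write, as in Section~\ref{Sec:SpectralEdgeFormula}, $\int t^n\,\mu(\d t)=C_n\,\wh{\E}_n[e^{ng(\lambda_n,\tau_n)}]$ with $g(\nu,s)=2s\langle\log k,\nu\rangle$ under the convention $\log 0=-\infty,\ e^{-\infty}=0$; the identity persists verbatim because any partition with a block of size in $L^c$ contributes zero to both \bref{Eq:MomentCumulant} and (via the convention) the expectation. Put $g^{(M)}(\nu,s)=2s\langle\max(\log k,-M),\nu\rangle$: this is bounded and continuous, decreases pointwise to $g$, and — crucially, using $k_n\le\Gamma$ — is bounded above by $2(\log\Gamma)_+$ uniformly in $M$. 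Since $g\le g^{(M)}$ we get $\int t^n\mu\le C_n\wh{\E}_n[e^{ng^{(M)}(\lambda_n,\tau_n)}]$, and the contraction of Theorem~\ref{Thm:LDP} (an LDP for $(\lambda_n,\tau_n)$ with good rate function $\wt{J}_{13}$, as in the proof of Theorem~\ref{Thm:EdgePositive}) together with Varadhan's lemma gives $\limsup_n\inv n\log\int t^n\mu\le\log 4+\sup(g^{(M)}-\wt{J}_{13})$. Letting $M\to\infty$: the $f_M:=g^{(M)}-\wt{J}_{13}$ are upper semicontinuous, decrease pointwise to $g-\wt{J}_{13}$, and satisfy $f_M\le 2(\log\Gamma)_+-\wt{J}_{13}$, so, $\wt{J}_{13}$ being a good rate function, the supremum of every $f_M$ is attained on a compact set independent of $M$, and a compactness (Dini-type) argument yields $\sup f_M\downarrow\sup(g-\wt{J}_{13})$. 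Invoking \bref{Eq:EdgeSptMoments} and simplifying $\log 4+\sup(g-\wt{J}_{13})$ exactly as in the proof of Theorem~\ref{Thm:EdgePositive} — with $g=-\infty$ on measures charging $L^c$, so that the supremum is over $\m_1^1(L)$ and the terms with $n\notin L$ drop out via $0\log 0=0$ — gives $\log\rho_\mu\le$ the right-hand side of \bref{Eq:EdgeNonnegative}.

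For the lower bound I would fix $\epsilon\in(0,1)$ and set $\mu_\epsilon=\mu\boxplus\varrho_\epsilon$, where $\varrho_\epsilon$ is the dilation by $\epsilon$ of the free Poisson (Marchenko--Pastur) law of rate $1$; then $\varrho_\epsilon$ is supported on $[0,4\epsilon]$ with $n$-th free cumulant $\epsilon^n$, so $\mu_\epsilon$ is compactly supported with free cumulants $k_n+\epsilon^n>0$ (and non-negative moments). Theorem~\ref{Thm:EdgePositive} — whose proof uses the hypothesis $\spt\mu\subseteq[0,\infty)$ only through \bref{Eq:EdgeSptMoments}, which holds for $\mu_\epsilon$ as well — then gives $\log\rho_{\mu_\epsilon}=\sup_{p\in\m_1^1(\N)}\big\{\inv{m_1(p)}\sum_n p_n\log\frac{k_n+\epsilon^n}{p_n}+\frac{\Theta(m_1(p))}{m_1(p)}\big\}$. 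Restricting the supremum to $p\in\m_1^1(L)$ makes the $n\notin L$ terms vanish and $k_n+\epsilon^n\ge k_n$ on $L$, so $\log\rho_{\mu_\epsilon}\ge$ the right-hand side of \bref{Eq:EdgeNonnegative}, for every $\epsilon$. On the other hand $\int t^n\mu_\epsilon\ge\int t^n\mu$ by \bref{Eq:MomentCumulant}, hence $\rho_{\mu_\epsilon}\ge\rho_\mu$, while the inequality $\sup\spt(\alpha\boxplus\beta)\le\sup\spt\alpha+\sup\spt\beta$ (Weyl's bound for the top of the spectrum of a sum of free self-adjoint elements) gives $\rho_{\mu_\epsilon}\le\rho_\mu+4\epsilon$; so $\rho_{\mu_\epsilon}\to\rho_\mu$. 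Letting $\epsilon\to0$ gives $\log\rho_\mu\ge$ the right-hand side of \bref{Eq:EdgeNonnegative}, and with the upper bound the theorem follows.

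The step I expect to be the main obstacle is the limit $M\to\infty$ in the upper bound: the relevant functional $g$ is only upper semicontinuous and takes the value $-\infty$, so Varadhan's lemma does not apply to it directly and one is forced to interchange a decreasing limit with a supremum. This is exactly where the goodness of $\wt{J}_{13}$ — equivalently, the constraint $m_1(\dot{\bm p})=\bm m$ built into the space $\e$, which confines the near-optimisers to a weakly compact family of measures — is needed; the non-continuity of $m_1$ in the weak topology is, as throughout the paper, the recurring point of friction.
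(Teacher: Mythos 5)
Your proposal is correct and takes essentially the same route as the paper: the lower bound is exactly the paper's argument (free convolution with the $\epsilon$-dilation of the free Poisson law, apply Theorem~\ref{Thm:EdgePositive} to $\mu_\epsilon$, restrict the supremum to $\m_1^1(L)$, let $\epsilon\to 0$), and your upper bound via truncating $\log k_n$ at $-M$ and passing to the limit using the goodness of $\wt{J}_{13}$ is a self-contained proof of precisely the upper-semicontinuous Varadhan upper bound that the paper imports by citing Exercise~2.1.24 of \ts{Deuschel--Stroock}. The additional care you take in justifying \bref{Eq:EdgeSptMoments} when $\spt\mu\not\subseteq[0,\infty)$ and in reducing to bounded cumulants by dilation are refinements of the same method, not a different one.
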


\begin{proof}
	Since the set $\left\{p\in\m_1(\N)\colon m_1(L^c)=0\right\}$ is closed the direction `$\leq$' in (\ref{Eq:EdgeNonnegative}) follows directly from Exercise 2.1.24 in \ts{Deuschel--Stroock} \cite{DeuschelStroock}. So we only need to show that the logarithm of the maximum of the support of our measure is bounded below by the variational formula. Let $p$ be the free Poisson distribution with parameter 1 and recall that $p$ has support $[0,4]$. For $\epsilon>0$ let $\nu_\epsilon = D_{\epsilon^{-1}}(p)$, the $\epsilon$-dilation of $p$ (see the proof of Theorem~\ref{Thm:EdgePositive}). Then $k_n(\nu_\epsilon)=\epsilon^n$. By the remarks after Example 3.2.3 in \cite{HiaiPetz} (page 98) the maximum of the support of $\mu_\epsilon:=\mu \boxplus \nu_\epsilon$ is no bigger than the sum of those of $\mu$ and $\nu_\epsilon$. Moreover Theorem~\ref{Thm:EdgePositive} applies to $\mu_\epsilon$ so that
\begin{align*}
	\rho_\mu+4\epsilon & \geq \rho_{\mu_\epsilon}\\
	& = \sup\left\{\inv{m_1(p)}\sum_{n=1}^\infty p_n \log \left( \frac{k_n + \epsilon^n}{p_n} \right) + \frac{\Theta(m_1(p))}{m_1(p)} \colon p\in \m_1^1(\N) \right\}\\
	& \geq \sup\left\{\inv{m_1(p)}\sum_{n=1}^\infty p_n \log \left( \frac{k_n + \epsilon^n}{p_n} \right) + \frac{\Theta(m_1(p))}{m_1(p)} \colon p\in \m_1^1(L) \right\}\\
	& \geq \sup\left\{\inv{m_1(p)}\sum_{n=1}^\infty p_n \log \left( \frac{k_n}{p_n} \right) + \frac{\Theta(m_1(p))}{m_1(p)} \colon p\in \m_1^1(L) \right\}
\end{align*}
using the fact that $\epsilon>0$. Letting $\epsilon$ tend to zero yields the  `$\geq$' direction of (\ref{Eq:EdgeNonnegative}).
\end{proof}

\section{Examples}

We conclude with a few examples where our formula can be applied. The main requirement, that the free cumulants be non-negative, is satisfied in a wide range of cases. 

\begin{ex}
	As a warm-up let us consider two (known) examples where the variational problem can be solved to give an explicit formula for the maximum of the support. The simplest example is the \emph{centred semicircle law} of radius $r$ given by
\begin{align*}
	\sigma_r(\d t) & = \frac{2}{\pi r^2}\, \sqrt{r^2- t^2}\,\one_{[-r,r]} \d t.
\end{align*}
Then, in the notation of Section~\ref{Sec:SpectralEdgeFormula}, $L=\{2\}$ and $k_2(\sigma_r)=\frac{r^2}{4}$. The only probability measure on $L$ is $\delta_2$ which has $m_1(\delta_2)=2$. Therefore,
\begin{align*}
	\log \rho_{\sigma_r} &= \inv{2}\log k_2 + \inv{2}\Theta(2)\\
						& = \log\left(\frac{r}{2}\right) + \inv{2}\left(2\log\left(1-\inv{2}\right)\right) = \log(r).
\intertext{Next let $\lambda\geq 1$ and consider the \emph{free Poisson distribution} $p_\lambda$ with parameter $\lambda$, i.e.,}
		p_\lambda(\d t) & = \inv{2\pi t}\,\sqrt{4\lambda-\left(t-1-\lambda\right)^2}\, \one_{[(1-\sqrt{\lambda})^2,(1+\sqrt{\lambda})^2]}(t)\, \d t.
	\intertext{The free cumulants are given by $k_n=\lambda$ for all $n\in\N$ and therefore}
		\log \rho_{p_\lambda} &=  \sup\left\{ 2\tau\log(\lambda) + 2\tau H(p) + 2\tau \Theta\left(\inv{2\tau} \right) \colon m_1(p)=\inv{2\tau}\right\} \\
	& = 2\sup_{\tau\leq \inv{2}}\left[\tau \log\lambda + 2\tau\Theta \left(\inv{2\tau}\right) \right].
	\end{align*}
	Putting $\Psi_\lambda(\tau)=\tau \log\lambda + 2\tau\Theta \left(\inv{2\tau}\right)$ we easily verify that $\Psi_\lambda'(\tau^*)=0$ for $\tau^*=\frac{\sqrt{\lambda}}{2\left(\sqrt{\lambda}+1\right)}$ and that this critical point is the absolute maximum of $\Psi_\lambda$ on $\left[0, \inv{2}\right]$. Another direct computation yields $\log \rho_{p_\lambda} = 2\Phi_\lambda(\tau^*)=2\log\left(1+\sqrt\lambda \right)$, i.e. $\rho_{p_\lambda}=\left(1+\sqrt{\lambda}\right)^2$.
\end{ex}
%
%


\begin{ex}
	Let us consider $\mu= p \boxplus u$ where $p$ is the free Poisson law of parameter 1 and $u$ is the uniform distribution on $[-1,1]$. This corresponds, for example, to the limiting distribution of $T_N^* T_N + \diag(\rho_1,\ldots,\rho_N)$ where $T_N$ is an $N\times N$ real random matrix with i.i.d. entries of mean 0 and variance 1 and all moments bounded and $\rho_N(j)=j-1-\frac{N}{2}$. The R-transform of $\mu$ is given by
\begin{align*}
	R_\mu(z) = R_p(z)+R_u(z) = \frac{1}{1-z}+\coth(z)-\inv{z}
\end{align*}
which cannot be inverted explicitly. We obtain an implicit equation for the maximum of the support of $\mu$, i.e. the limiting largest eigenvalue:
\begin{align*}
	\rho_\mu & =\frac{\pi(m-1)}{m \gamma}
	\intertext{where $(\gamma,m)$ is the unique pair of positive reals satisfying the equations}
	\inv{m-1} & = \frac{\gamma}{1-\gamma}+\coth(\gamma)\\
	\frac{\lambda(m-1)}{1-\gamma} + (m-1)\coth(\gamma) & = \frac{m-1}{\gamma}  + \frac{\gamma^2+(1-\gamma)^2}{m\gamma(1-\gamma)^2}+\frac{\gamma}{m}\, \left(1-\coth^2(\gamma)\right).
\end{align*}
\end{ex}

\subsection{Freely Infinitely Divisible Distributions}

Let $\mu$ be \textit{freely infinitely divisible}. That is, for every $n\in\N$ there exists a compactly supported probability law $\mu_n$ such that $\mu$ is the $n$-fold free convolution of $\mu_n$ with itself:
\begin{align}
	\notag
	\mu & = \underbrace{\mu_n\boxplus\ldots\boxplus\mu_n}_{n\text{ times}}.
\intertext{Freely infinitely divisible probability measures have been studied by \ts{Barndorff-Nielsen} -- \ts{Thorbj\o rnsen} \cite{BNT_1, BNT_2}. Many of their properties are non-commutative analogues of those enjoyed by classical infinitely divisible distributions, for example they lead to the concept of free L\'evy processes. There exists an analogue of the L\'evy-Khintchine representation, a version of which is given in \cite{HiaiPetz}, where Theorem 3.3.6 states that $\mu$ is freely infinitely divisible if and only if there exist $\alpha\in\R$ and a positive finite measure $\nu$ with compact support in \R\ such that the R-transform $R_\mu$ of $\mu$ can be written, for $z$ in a neighbourhood of $(\C\setminus\R)\cup\{0\}$, as}
	\label{Eq:LevyKhintchine}
	R_\mu(z) & = \alpha + \int\frac{z}{1-xz}\, \nu(\d x).
\end{align}
We call $\nu$ the \textit{free L\'evy--Khintchine measure} associated to $\mu$.
By Remark~\ref{Rmk:ShiftFirstCumulant} we lose no generality by setting $k_1(\mu)=\alpha=0$. Setting $m_0(\nu):=\nu(\R)$ we can express the cumulants of $\mu$ in terms of the sequence $\left(m_n(\nu)\right)_{n\geq 0}$ by observing that $k_n(\mu)=m_{n-2}(\nu)$ for $n\geq 2$.
\par So if $\mu$ is freely infinitely divisible and the moments of its free L\'evy-Khintchine measure are all non-negative the variational formula for the maximum of the support of $\mu$ from Theorem~\ref{Thm:EdgeNonnegative} applies.

\subsection{Series of Free Random variables}

Let $\xi_1,\xi_2,\ldots$ be a sequence of free self-adjoint random variables of identical distribution $\mu_1$ and consider the series
\begin{align*}
	\xi & = \sum_{n=1}^\infty n^{-\beta} \xi_n
\end{align*}
where $\beta>0$ is chosen large enough for the series to converge in the operator norm. Let $k_n(\mu_1)$ denote the free cumulants of $\mu_1$ then the R-transform $R_\xi$ of $\xi$ is given by
\begin{align*}
	R_\xi(z) & = \sum_{n=1}^\infty n^{-\beta} R_{\xi_1} \left(n^{-\beta}z\right) = \sum_{n=1}^\infty n^{-\beta} \sum_{r=0}^\infty k_r(\mu_1) \left(n^{-\beta}z\right)^r.
\intertext{Let $U$ be a neighbourhood of zero where $R_{\xi_1}$ is analytic then we have absolute convergence on $U$ and hence may interchange the order of the two summations:}
R_\xi(z) & = \sum_{r=0}^\infty \sum_{n=1}^\infty n^{-r\beta}  k_r(\mu_1) z^r = \sum_{r=0}^\infty \zeta(\beta r)  k_r(\mu_1) z^{r-1} 
\end{align*}
where $\zeta$ denotes the Riemann zeta function. So we conclude that the free cumulants of $\xi$ are given in terms of those of $\xi_1$ by $k_n=\zeta(\beta n)\,k_n(\mu_1)$.

It may not be possible to locally invert the corresponding analytic function $R_0$ in closed form. In this case our formula comes in useful and we obtain:

\begin{cor}
	Suppose that the free cumulants of $\mu_1$ are all non-negative. Then the right edge $\rho_{0}$ of the support of the law of the series $\xi_0$ is given by
\begin{align}
	\label{Eq:SeriesVP}
	\log\left(\rho_0 \right) & = \sup\left\{\inv{m_1(p)}\sum_{n=1}^\infty p_n\log\left(\frac{\zeta(\beta n) \uind{k}{0}_n}{p_n}\right) - \frac{\Theta(m_1(p))}{m_1(p)} \colon p\in\m_1^1(L)\right\}.
\end{align}
\end{cor}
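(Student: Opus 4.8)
The plan is to check that the spectral distribution $\mu$ of $\xi_0$ meets the hypotheses of Theorem~\ref{Thm:EdgeNonnegative} and then apply that theorem verbatim. First I would observe that, since $\beta$ is taken large enough for $\xi_0=\sum_{n\geq 1} n^{-\beta}\xi_n$ to converge in operator norm, $\xi_0$ is a bounded self-adjoint element of the ambient von Neumann algebra; hence $\mu$ is a probability measure supported on the compact interval $[-\norm{\xi_0},\norm{\xi_0}]$. (Boundedness of each $\xi_n$, forced by analyticity of $R_{\xi_1}$ near zero, is also what makes the preceding computation of $R_\xi$ legitimate.)

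Next I would invoke the identity $k_n=\zeta(\beta n)\,k_n(\mu_1)$ for the free cumulants of $\mu$, established just before the statement by interchanging the two absolutely convergent sums defining $R_\xi$ on a neighbourhood of zero. Note that norm convergence of the series forces $\beta>1$ (otherwise the $n=1$ term $\zeta(\beta)k_1(\mu_1)$ would already be meaningless), so $\zeta(\beta n)\in(0,\infty)$ for every $n\geq 1$. Combined with the standing assumption $k_n(\mu_1)\geq 0$ this gives $k_n\geq 0$ for all $n$, and the set $L=\{n\colon k_n\neq 0\}$ coincides with $\{n\colon k_n(\mu_1)\neq 0\}$. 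Theorem~\ref{Thm:EdgeNonnegative} therefore applies to $\mu$, and substituting $k_n=\zeta(\beta n)k_n(\mu_1)$ into (\ref{Eq:EdgeNonnegative}) — using that $p(L^c)=0$ collapses the sum over $\N$ to a sum over $L$ under the convention $0\log 0=0$ — yields exactly (\ref{Eq:SeriesVP}).

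There is essentially no obstacle internal to this corollary: all the real work sits in Theorem~\ref{Thm:EdgeNonnegative} and in the cumulant computation preceding the statement. The one point worth double-checking — and the only step that is analytic rather than formal — is that the R-transform of the full series really equals the local (near-zero) limit of the R-transforms $\sum_{n=1}^{N} n^{-\beta}R_{\xi_1}(n^{-\beta}z)$ of the partial sums: this holds because operator-norm convergence of $\sum_{n\leq N}n^{-\beta}\xi_n$ implies weak convergence of their spectral distributions, hence locally uniform convergence of the associated Cauchy transforms off the real axis and of their functional inverses on a common neighbourhood of zero, so the power-series coefficients — the free cumulants — converge term by term.
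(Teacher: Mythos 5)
Your proposal is correct and matches the paper's (essentially unstated) argument: the corollary is obtained by feeding the cumulant identity $k_n=\zeta(\beta n)\,k_n(\mu_1)$, derived just before the statement, into Theorem~\ref{Thm:EdgeNonnegative}, whose hypotheses (compact support from operator-norm convergence, non-negativity of the $k_n$ since $\zeta(\beta n)>0$) you verify exactly as the paper intends. Your closing remark justifying the term-by-term R-transform computation via convergence of the partial sums is a sensible extra check but does not change the route.
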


\par\noindent In some cases we can solve this variational problem and obtain a more or less explicit formula for the maximum of the support.

\begin{ex}
	Suppose $\mu_1$ is the free Poisson distribution of parameter $\lambda\geq 1$. We set $\beta=2$ and study $\sum_n n^{-2} \xi_n$ where the $\xi_n$ are free and all distributed according to the free Poisson law. The corresponding R-transform is 
\begin{align*}
	R(z) & =\frac{\lambda\left(1-\sqrt{z}\cot\left(\sqrt{z}\right)\right)}{2z}
\end{align*}
for which no closed-form inverse exists. However there is a unique maximiser for the corresponding variational problem (\ref{Eq:SeriesVP}), given by $p_n=\frac{\lambda\zeta(2n)}{Z} e^{m t n}$ and determined by its mean $m$. That mean is given implicitly by
\begin{align*}
	\lambda(m-1)-2 & = \sqrt{4\lambda m^2-2\lambda m - 2(\lambda-2)}\, \cot\left( \frac{\sqrt{4\lambda m^2-2\lambda m - 2(\lambda-2)}}{\lambda(m-1)}\right)
\intertext{which has a unique solution $m_*$ in the relevant interval. The right edge is therefore given by}
	\rho &= \log \frac{\lambda^2 m_*^2(m_*-1)}{4\lambda m_*^2-2\lambda m_* -2(\lambda -2)}.
\end{align*}
The choice $\lambda=1$ corresponds to the square integral of a free Brownian bridge which has been studied in \cite{FuncBB}. 
\end{ex}

\par\noindent Another example, where the $\xi_n$ are distributed according to the \emph{commutator} of the standard semicircle law with itself, can also be found in \cite{FuncBB}. The commutator of two free random variables $a$ and $b$ is $[a,b]=i\left(ab-ba\right)$, see \ts{Nica--Speicher} \cite{NicaSpeicher98}. The free random variable $[a,b]$ is bounded and self-adjoint, provided $a$ and $b$ are. In \cite{FuncBB} implicit equations for the maximum of the support of $[a,b]$ are obtained when $a$ and $b$ are two free standard semicircular random variables.

\ \\ \ \\


\begin{thebibliography}{10}

\bibitem{BNT_1}
{\sc Barndorff-Nielsen, O.~E., and {Thorbj\o rnsen}, S.}
\newblock {L\'evy Laws in Free Probability}.
\newblock {\em Proceedings of the National Academy of Sciences of the United
  States of America 99}, 26 (2002), 16576 -- 16580.

\bibitem{BNT_2}
{\sc Barndorff-Nielsen, O.~E., and {Thorbj\o rnsen}, S.}
\newblock Self-decomposability and lévy processes in free probability.
\newblock {\em Bernoulli 8}, 3 (2002), 326 -- 366.

\bibitem{Biane97}
{\sc Biane, P.}
\newblock On {the Free Convolution with a Semi-circular Distribution}.
\newblock {\em Indiana University Mathematics Journal 46}, 3 (1997), 705 --
  718.

\bibitem{BianeFPP}
{\sc Biane, P.}
\newblock Free probability for probabilists.
\newblock {\em Quantum Probability Communications XI\/} (2003).

\bibitem{Bollobas}
{\sc {Bollob\' as}, B.}
\newblock {\em Linear Analysis}, second~ed.
\newblock Cambridge University Press, 1999.

\bibitem{Callan08}
{\sc Callan, D.}
\newblock Sets, lists and noncrossing partitions.
\newblock {\em Journal of Integer Sequences 11}, Article 08.1.3 (2008), 1 -- 7.

\bibitem{DemboZajic95}
{\sc Dembo, A., and Zajic, T.}
\newblock Large {Deviations: From Empirical Mean and Measure to Partial Sums
  Process}.
\newblock {\em Stochastic Processes and Their Applications 57\/} (1995), 191 --
  224.

\bibitem{DemboZeitouni}
{\sc Dembo, A., and Zeitouni, O.}
\newblock {\em Large {Deviations. Techniques and Applications}}, vol.~38 of
  {\em Applications of Mathematics}.
\newblock Springer, New York, 1998.

\bibitem{DeuschelStroock}
{\sc Deuschel, J.-D., and Stroock, D.~W.}
\newblock {\em Large Deviations}.
\newblock Academic Press, Boston, MA., 1989.

\bibitem{Deutsch99}
{\sc Deutsch, E.}
\newblock Dyck path enumeration.
\newblock {\em Discrete Mathematics 204\/} (1999), 167 -- 202.

\bibitem{Durrett}
{\sc Durrett, R.}
\newblock {\em Probability: Theory and Examples}.
\newblock Wadsworth, 1991.

\bibitem{HiaiPetz}
{\sc Hiai, F., and Petz, D.}
\newblock {\em The Semicircle Law, Free Random Variables and Entropy}, vol.~77
  of {\em Mathematical Surveys and Monographs}.
\newblock American Mathematical Society, 2000.

\bibitem{Kreweras72}
{\sc Kreweras, G.}
\newblock Sur les partitions non crois\'ees d'un cycle.
\newblock {\em Discrete Mathematics 1}, 4 (1972), 333 --350.

\bibitem{LynchSethuraman87}
{\sc Lynch, J., and Sethuraman, J.}
\newblock Large {Deviations for Processes with Independent Increments}.
\newblock {\em Annals of Probability 15}, 2 (1987), 610 -- 627.

\bibitem{McCammond06}
{\sc McCammond, J.}
\newblock Noncrossing partitions in suprising locations.
\newblock {\em American Mathematical Monthly 113\/} (2006), 598 -- 610.

\bibitem{NicaSpeicher98}
{\sc Nica, A., and Speicher, R.}
\newblock Commutators of {Free Random Variables}.
\newblock {\em Duke Mathematical Journal 92}, 3 (1998), 553--559.

\bibitem{NicaSpeicher}
{\sc Nica, A., and Speicher, R.}
\newblock {\em Lectures on the Combinatorics of Free Probability}.
\newblock Cambridge University Press, 2006.

\bibitem{FuncBB}
{\sc Ortmann, J.}
\newblock Functionals of the {Free Brownian Bridge}.
\newblock {\em In preparation\/}.

\bibitem{Prodinger83}
{\sc Prodinger, H.}
\newblock A correspondence between ordered trees and noncrossing partitions.
\newblock {\em Discrete Mathematics 46\/} (1983), 205 -- 206.

\bibitem{Reiner97}
{\sc Reiner, V.}
\newblock {Non-Crossing Partitions for Classical Reflection Groups}.
\newblock {\em Discrete Mathematics 177\/} (1997), 195 -- 222.

\bibitem{Schied98}
{\sc Schied, A.}
\newblock {Cram{\'e}r's Condition and Sanov's Theorem}.
\newblock {\em Statistics and Probability Letters 39}, 1 (1998), 55 -- 60.

\bibitem{Simion00}
{\sc Simion, R.}
\newblock Noncrossing partitions.
\newblock {\em Discrete Mathematics 217\/} (2000), 367 -- 409.

\bibitem{Speicher94}
{\sc Speicher, R.}
\newblock Multiplicative functions on the lattice of non-crossing partitions
  and free convolution.
\newblock {\em Math. Ann. 298\/} (1994), 611--628.

\bibitem{Stanley2}
{\sc Stanley, R.~P.}
\newblock {\em Enumerative Combinatorics, Volume 2}, vol.~62 of {\em Cambridge
  Studies in Advanced Mathematics}.
\newblock Cambridge University Press, 1999.

\bibitem{VDN}
{\sc Voiculescu, D.~V., Dykema, K.~J., and Nica, A.}
\newblock {\em Free Random Variables}, vol.~1 of {\em CRM Monograph Series}.
\newblock American Mathematical Society, 1992.

\bibitem{YanoYoshida07}
{\sc Yano, F., and Yoshida, H.}
\newblock Some set partition statistics in non-crossing partitions and
  generating functions.
\newblock {\em Discrete Mathematics 307\/} (2007), 3147 -- 3160.

\end{thebibliography}

\ \\\ \\ \noindent \textsc{Mathematics Institute, University of Warwick, Coventry CV4 7AL, UK}\\\ \\ \textit{Email address} \texttt{j.ortmann@warwick.ac.uk}

\end{document}